\definecolor{refkey}{gray}{0.75}
\colorlet{labelkey}{blue}
\pgfplotsset{compat=1.16}
\newtheorem{theorem}{Theorem} 
\newtheorem{lemma}[theorem]{Lemma}
\newtheorem{corollary}[theorem]{Corollary}
\theoremstyle{remark}
\newtheorem*{merci}{Acknowledgments}
\providecommand{\abs}[1]{|#1 |}
\newcommand{\wh}{\widehat }
\newcommand{\R}{{\mathbb R}} 
\newcommand{\C}{{\mathbb C}} 
\newcommand{\N}{{\mathbb N}}
\newcommand{\dd}{{\rm d}}
\newcommand{\gq}{\mathfrak{q}}
\newcommand{\boE}{\mathcal{E}}
\newcommand{\sech}{\operatorname{sech}}
\newcommand{\W}{\mathcal{W}}
\newcommand{\Emin}{E_\textup{min}}
\newcommand{\rot}{\textup{rot}}
\title[Numerical solitons for NLS]
{Numerical computation of dark solitons of a nonlocal nonlinear Schr\"odinger
equation}
\author[A. de Laire]{Andr\'e de Laire}
\author[G. Dujardin]{Guillaume Dujardin}
\author[S. L{\'o}pez-Mart{\'i}nez]{Salvador L{\'o}pez-Mart{\'i}nez}
\begin{document}
%\tikzset{external/force remake}

%%% ABSTRACT

\begin{abstract}
  The existence and decay properties of dark solitons for a large class of nonlinear
  nonlocal Gross-Pitaevskii equations with nonzero boundary conditions in dimension one
  has been established recently \cite{dLMar2022}.
  Mathematically, these solitons correspond to minimizers of the energy at fixed momentum
  and are orbitally stable.
  This paper provides a numerical method to compute approximations of such solitons
  for these types of equations, and provides actual numerical experiments for several
  types of physically relevant nonlocal potentials.
  These simulations allow us to obtain a variety of dark solitons, and to comment on their
  shapes in terms of the parameters of the nonlocal potential.
  In particular, they suggest that, given the dispersion relation,
  the speed of sound and the Landau speed are important values to understand
  the properties of these dark solitons.
  They also allow us to test the necessity of some sufficient conditions
  in the theoretical result proving existence of the dark solitons.

% We provide numerical simulations to determine branches of dark solitons for nonlinear Gross-Pitaevskii equations for several nonlocal potentials in dimension one. These simulations provide useful information about the behavior of solitons, when nonlocal potentials are taken into account.

% Our simulations, for physically relevant nonlocal potentials,  give a wide range of dark solitons, with different shapes. Numerically, .
% The numerical simulation is performed by using ...

% The results of the simulations suggest that , given the dispersion relation, , and therefore, in the stability and dynamics of the nonlocal Gross--Pitaevskii equation.
\end{abstract}

%%% TITLE AND AMS CLASSIFICATION

\maketitle
{\small\noindent 
{\bf AMS Classification.} 
	%	% 35-XX PARTIAL DIFFERENTIAL EQUATIONS
35Q55; %NLS-like equations
35C07; % Traveling wave solutions
35B35; % Stability 
35C08; % Soliton solutions
37K40 % Soliton theory, asymptotic behavior of solutions

\bigskip\noindent{\bf Keywords.} 
Nonlocal Schr\"odinger equation, Gross--Pitaevskii equation, 
numerical methods, numerical computations, traveling waves, dark solitons,		nonzero conditions at infinity.

%%%% SECTION
\section{Introduction}
%\section{The continuous problem}
\label{sec:cont}

%\subsection{\color{red}Insert a subsection title here}
We consider the one-dimensional nonlocal Gross--Pitaevskii equation for
$\Psi : \R\times\R \rightarrow \C$
\begin{equation}
  \label{eq:GP}
  i\partial_t \Psi = \partial_x^2 \Psi + ({\mathcal W}\star (1-|\Psi|^2)) \Psi,
\end{equation}
with nonvanishing  boundary conditions at infinity, i.e.\ 
\begin{equation}
  \label{eq:BCGP}
   \lim_{|x|\to +\infty} |\Psi(x,t)| = 1, \quad \text{for all }t,
\end{equation}
and appropriate initial conditions, where $\star$ denotes the convolution in space with a tempered distribution $\W$.
This equation  appears, for instance,  as a model for the evolution of a one-dimensional optical beam of intensity $|{\Psi}|^2$ in a self-defocusing nonlocal Kerr-like medium, where $\W$ characterizes the nonlocal response of the medium \cite{nikolov2004,krolikowski2000}. 
In this case, condition \eqref{eq:BCGP} is natural when studying dark optical solitons.
In all the physical situations, $\mathcal W$  is assumed to be a real-valued
and symmetric distribution, so that  \eqref{eq:GP} is a Hamiltonian equation,
and the energy and momentum 
%\begin{equation*}
	\begin{align}
		\label{energy}
	E(\Psi(t))=&\frac12 \int_{\R}\abs{\partial_x\Psi(t)}^2\,\dd x +\frac 14 \int_{\R}(\W\star (1-\abs{\Psi(t)}^2))(1-\abs{\Psi(t)}^2)\,\dd x,\\
			\label{momentum}
	P(\Psi(t))&=\int_{\mathbb{R}}\langle i \partial_x \Psi(t),\Psi(t) \rangle_{ \C}\left(1-\frac{1}{|\Psi(t)|^2}\right)\dd x,
	\end{align}
%\end{equation*}
are formally conserved, where $\langle z_1,z_2 \rangle_{\C} =\Re(z_1 \bar z_2)$, for $z_1$, $z_2\in \C$. 

In the most typical first approximation, $\mathcal W$ is considered
as a Dirac delta function $\delta_0$, which, from \eqref{eq:GP}, leads to the standard
Gross--Pitaevskii equation
\begin{equation}
	\label{eq:GP_local}
	i\partial_t \Psi = \partial_x^2 \Psi +  (1-|\Psi|^2) \Psi,
\end{equation}
with nonvanishing conditions at infinity.      
We refer to \cite{delaire-mennuni,dLMar2022,de2010global} for more details about the theory and applications of equations \eqref{eq:GP} and  \eqref{eq:GP_local}.

%(see e.g.\ \cite{JPR1,JPR2,kivshar,coste}). 

%The convolution in \eqref{eq:GP} involves some tempered distribution ${\mathcal W}\in{\mathcal S}'(\R)$
%(see examples below), and the classical NLS equation with nonzero boundary conditions at infinity
%corresponds to ${\mathcal W}=\pm\delta_0$.

Dark solitons play a fundamental role in the  dynamics of finite-energy solutions to \eqref{eq:GP} propagating in a constant background. Thus, these solutions  
have brought  theoretical and experimental attention in the last decades (see e.g. \cite{frantzeskakis2010,becker2008,BethGrav2015}).
Precisely, these solitons correspond to smooth traveling wave solutions of the form $\Psi(x,t)=u(x-ct)$,
for some speed $c\in\R$, with localized derivative.
Therefore, we focus on finding functions $u : \R\rightarrow \C$ solving
\begin{equation}
  \label{eq:TWc}
  \tag{$\textup{TW}_c$}
    icu'+u''+({\mathcal W}\star (1-|u|^2)) u = 0,
\end{equation}
and belonging to the energy space
	$$\mathcal{E}(\mathbb{R})=
\{v \in H^{1}_{\text{loc}}(\mathbb{R}) : 1-|v|^{2}\in L^{2}(\mathbb{R}), \ v' \in L^{2}(\mathbb{R})\}.$$
Notice that this is justified by assuming that the Fourier transform of $\W$ is bounded, i.e.\ that   $\wh\W\in L^\infty(\R)$ (see \cite{dLMar2022}). 
Note that, in this paper, we use the convention that for  $f\in L^1(\R)$,
\[\wh f(\xi)=\int_\R e^{-ix\xi}f(x) {\rm d}x,\quad \xi\in\R,\] 
so that $\wh \delta_0=1$.
Moreover, we only consider distributions $\W$ such that $\wh \W$ is (bounded and) continuous,
so that we can, and will, also assume the normalization condition $\wh \W(0)=1$.

Notice that any constant function $u$ of modulus $1$ is a solution to \eqref{eq:TWc} with zero energy.
We refer to such functions as trivial solutions.
Observe that there are also oscillating solutions to \eqref{eq:TWc} with infinite energy such as 
\begin{equation*}
	u_{r,c}^\pm(x)=r \exp\Big( ix \Big( {\frac{-c\pm\sqrt{c^2+4(1-r^2)}}{2}}\Big) \Big),\quad\text{ for all }r\in (0,1) \text{ and } c\in\R.
\end{equation*}

Another remark is that, by taking the complex conjugate of $u$ in equation \eqref{eq:TWc},
we only need to consider $c\geq 0$. 

In the sequel, when we refer to a solution
or a soliton to \eqref{eq:TWc}, we implicitly assume that it is a \textit{nontrivial finite-energy} solution to \eqref{eq:TWc}. Let us recall the following result concerning 
the properties of these solutions.

\begin{lemma}[Decay properties of dark solitons (see \cite{dLMar2022})]
  \label{lemma:regularity}
  Let  $\W$ be a real-valued even tempered distribution such that $\widehat\W\in L^\infty(\R)$.
  Assume that $c\geq 0$ and that $u\in \mathcal E(\R)$ is a solution to \eqref{eq:TWc}.
  Then $u$ is bounded and of class $\mathcal C^\infty(\R)$.
  Moreover, if $c>0$, then $u$ does not vanish on $\R$ and there exists a smooth lifting of $u$.
  More precisely, there is real-valued function $\theta\in\mathcal C^\infty(\R)$
  and another $\mathcal C^\infty$ function $\eta$ over $\R$ with values in $(-\infty,1)$ such that
  $u=\sqrt{1-\eta} e^{i\theta}$ on $\R$,
  with for all $k\in\N$, $\theta',\eta \in H^{k}(\R)$, and 
\begin{equation}
	\label{limits-infty}
	\abs{u}(\pm \infty)=1,\  D^j u(\pm \infty)=D^j\theta(\pm \infty)=D^j\eta(\pm\infty)=0, \ \ \text{for all } j\geq 1.
\end{equation}
\end{lemma}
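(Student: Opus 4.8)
The plan is to argue in three stages: (i) a priori boundedness of $u$ together with $|u|\to 1$ at infinity, purely from membership in $\mathcal E(\R)$; (ii) a bootstrap using \eqref{eq:TWc} to upgrade $u$ to $\mathcal C^\infty(\R)$ with $u'\in H^k(\R)$ and $1-|u|^2\in H^k(\R)$ for every $k$; and (iii), when $c>0$, a first–integral identity that simultaneously forbids $u$ from vanishing and yields the smooth polar decomposition. For stage (i): since $u\in H^1_{\mathrm{loc}}(\R)$ with $u'\in L^2(\R)$, Cauchy--Schwarz gives $|u(x)-u(y)|\le|x-y|^{1/2}\|u'\|_{L^2}$, so $|u|$ is globally $1/2$--Hölder; if $|u|$ were unbounded it would be arbitrarily large on intervals whose length grows at least like $|u|^2$, contradicting $1-|u|^2\in L^2(\R)$, hence $u\in L^\infty(\R)$. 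Then $(1-|u|^2)'=-2\langle u',u\rangle_{\C}\in L^2(\R)$, so $1-|u|^2\in H^1(\R)\hookrightarrow\mathcal C_0(\R)$, giving $|u|(x)\to 1$ as $|x|\to+\infty$.

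For stage (ii) the two structural facts are: because $\wh\W\in L^\infty(\R)$, convolution with $\W$ is bounded on $H^k(\R)$ for every $k\ge0$ (it is multiplication by $\wh\W$ on the Fourier side, which is also how the term $\W\star(1-|u|^2)$ is defined here); and, in dimension one, products in which at most one factor is merely bounded with bounded derivatives (such as $u$ itself) stay in $H^k(\R)$, and $F(1-|u|^2)\in H^k(\R)$ for $F$ smooth with $F(0)=0$ (the range of $1-|u|^2$ is a fixed compact set by stage (i)). Writing $g:=\W\star(1-|u|^2)$, stage (i) gives $g\in H^1(\R)\hookrightarrow L^\infty(\R)$, so from $u''=-icu'-gu$ we get $u''\in L^2(\R)$, hence $u'\in H^1(\R)\hookrightarrow L^\infty(\R)$. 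Differentiating \eqref{eq:TWc} together with $(1-|u|^2)'=-2\langle u',u\rangle_{\C}$ and iterating, one proves by induction on $m$ that $u'\in H^m(\R)$ and $1-|u|^2\in H^m(\R)$ for all $m$; in particular $u\in\mathcal C^\infty(\R)$ and all derivatives $D^j u$ with $j\ge1$ lie in $\bigcap_k H^k(\R)$, so they tend to $0$ at $\pm\infty$.

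For stage (iii), multiply \eqref{eq:TWc} by $\bar u$ and take imaginary parts; using $\Im(u''\bar u)=\tfrac{d}{dx}\Im(u'\bar u)$ and $\Re(u'\bar u)=\tfrac12(|u|^2)'$ gives $\tfrac{d}{dx}\bigl(\Im(u'\bar u)+\tfrac c2|u|^2\bigr)=0$, and letting $x\to+\infty$ (using stages (i)--(ii)) identifies the constant, so
\[
\Im(u'\bar u)=\tfrac c2\,(1-|u|^2)\qquad\text{on }\R.
\]
If $c>0$ and $u(x_0)=0$, the left-hand side vanishes at $x_0$ while the right-hand side equals $c/2>0$, a contradiction; hence $u$ never vanishes. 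Since $u$ is $\mathcal C^\infty$, non-vanishing and $\R$ is an interval, there is $\theta\in\mathcal C^\infty(\R,\R)$ with $u=|u|e^{i\theta}$; set $\eta:=1-|u|^2<1$, so $u=\sqrt{1-\eta}\,e^{i\theta}$ with $\eta\in\mathcal C^\infty(\R)$. From stage (ii), $\eta\in H^k(\R)$ for all $k$; since $\eta\le 1-\delta$ for some $\delta>0$, the identity above gives $\theta'=\Im(u'\bar u)/|u|^2=\tfrac c2\,\eta/(1-\eta)\in H^k(\R)$ for all $k$ by the composition estimate. Finally \eqref{limits-infty} follows because $\theta',\eta$ and all their derivatives, and $D^ju$ for $j\ge1$, lie in $\bigcap_k H^k(\R)\subset\mathcal C_0^\infty$.

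The only genuinely new ingredient compared with the local Gross--Pitaevskii case is the nonlocal term, and once one records that $\wh\W\in L^\infty(\R)$ makes $\W\star\cdot$ bounded on every $H^k(\R)$, the bootstrap proceeds exactly as in the local setting; the non-vanishing for $c>0$, the only place the sign of $c$ is used, then drops out of the first integral above. I expect the main effort to be routine rather than conceptual: the a priori $L^\infty$ bound of stage (i) and the Sobolev product and composition estimates that make the induction of stage (ii) close cleanly.
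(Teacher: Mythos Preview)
The paper does not prove this lemma; it is quoted from \cite{dLMar2022} without argument, so there is no in-paper proof to compare against. Your sketch is correct and is the standard route: the $L^\infty$ bound from the energy space via the $1/2$--H\"older estimate, the Sobolev bootstrap using that $\wh\W\in L^\infty(\R)$ makes $\W\star(\cdot)$ bounded on every $H^k(\R)$, and the first-integral identity $\Im(u'\bar u)=\tfrac{c}{2}(1-|u|^2)$ (from the imaginary part of $\bar u\cdot\eqref{eq:TWc}$) to exclude zeros when $c>0$ and to express $\theta'$ in terms of $\eta$. This is essentially the argument in \cite{dLMar2022}. One small clarification worth making explicit in your stage~(ii) induction step: to get $u''\in L^2(\R)$ from $u''=-icu'-gu$ you use $g\in L^2(\R)$ (not just $L^\infty$) together with $u\in L^\infty(\R)$; you have this since $g=\W\star(1-|u|^2)\in H^1(\R)$.
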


In addition, the \textit{existence} of such solutions is also proved in  \cite{dLMar2022},
and we can rewrite this result as follows.
For the sake of completeness, we provide a proof in Appendix.
\begin{theorem}\label{thm:dLMar}
  Let  $\W$ be a real-valued even tempered distribution such that $\widehat\W\in L^\infty(\R)$.
  Assume that there is $\sigma \in(0,1]$ such that 
	\begin{equation}
		\label{def:sigma}
		\inf_\R \big(     \widehat \W(\xi)+\xi^2/2\big)=\sigma.
	\end{equation}	
  Then, for almost every $c\in (0,\sqrt{2\sigma})$,
  there exists a nontrivial solution $u\in \mathcal E(\R)$ to \eqref{eq:TWc}.
\end{theorem}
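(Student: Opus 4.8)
The natural route, following \cite{dLMar2022}, is variational: obtain $u$ as a minimizer of the energy $E$ among finite-energy configurations with prescribed momentum, and recover $(\textup{TW}_c)$ as the Euler--Lagrange equation, the speed $c$ entering through the associated Lagrange multiplier. For $\gq>0$ introduce the minimizing curve
\[
\Emin(\gq)=\inf\{E(v):\ v\in\mathcal E(\R),\ P(v)=\gq\},
\]
with $E,P$ given by \eqref{energy}--\eqref{momentum}, the momentum being understood in a suitably renormalized sense on $\mathcal E(\R)$. Using $\widehat\W\in L^\infty(\R)$ one checks that both functionals are well defined and that $\Emin(\gq)\in[0,\infty)$. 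Hypothesis \eqref{def:sigma} enters through the pointwise bound $\widehat\W(\xi)+\xi^2/2\ge\sigma$: rewriting the quadratic part of $E$ on the Fourier side and testing with configurations close to a constant of modulus one whose density perturbation $1-|v|^2$ has Fourier content concentrated near a frequency realizing the infimum in \eqref{def:sigma}, one obtains the Landau-type bound $\Emin(\gq)\le\tfrac{\sqrt{2\sigma}}{2}\,\gq$ together with the asymptotics $\Emin(\gq)/\gq\to\tfrac{\sqrt{2\sigma}}{2}$ as $\gq\to0^+$. One then proves that $\gq\mapsto\Emin(\gq)$ is concave and nondecreasing, that the Landau bound is strict for $\gq>0$, and hence, by concavity, that $\Emin$ is strictly subadditive. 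These two strict inequalities, both genuinely using $\sigma>0$, are exactly what the compactness step consumes.

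Now fix $\gq>0$ and take a minimizing sequence $(v_n)$ for $\Emin(\gq)$; it is bounded in $\mathcal E(\R)$ by coercivity of $E$ (again through $\sigma>0$). Apply the concentration--compactness principle to the energy densities of $(v_n)$: \emph{vanishing} is ruled out by the strict Landau bound $\Emin(\gq)<\tfrac{\sqrt{2\sigma}}{2}\gq$, since a spreading sequence would at best realize the quadratic (Landau) value, and \emph{dichotomy} is ruled out by the strict subadditivity of $\Emin$. Thus, up to translations and a subsequence, $(v_n)$ converges in a suitable topology to some $u_\gq\in\mathcal E(\R)$ with $P(u_\gq)=\gq$ and $E(u_\gq)=\Emin(\gq)$. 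Since $\gq>0$, $u_\gq$ cannot be a constant of modulus one, hence it is a nontrivial finite-energy function.

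The functionals $E$ and $P$ are of class $\mathcal C^1$ on $\mathcal E(\R)$ and $P'(u_\gq)\neq0$, so the constrained minimizer satisfies $E'(u_\gq)=\lambda\,P'(u_\gq)$ for some $\lambda\in\R$; written out, and using that (modulo a locally constant phase-jump term) $P'$ equals twice the untwisted momentum gradient $iu'$, this is precisely $(\textup{TW}_c)$ with $c=2\lambda$. Moreover $\lambda$ belongs to the superdifferential at $\gq$ of the concave function $\Emin$, so for all but countably many $\gq>0$ it is uniquely determined, $\lambda=\Emin'(\gq)$, and then $c=2\Emin'(\gq)$. Where it exists, $\Emin'$ is nonincreasing; by the first paragraph its limit at $0^+$ equals $\tfrac{\sqrt{2\sigma}}{2}$, and since $\Emin$ remains strictly below its tangent line at the origin one gets $\Emin'(\gq)<\tfrac{\sqrt{2\sigma}}{2}$ for every $\gq>0$, while $\Emin'\to0$ at the upper end of the range of admissible momenta. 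Consequently the a.e.\ values of $2\,\Emin'$ cover, up to a Lebesgue-null set, the whole interval $(0,\sqrt{2\sigma})$; combined with the previous step, this yields a nontrivial $u\in\mathcal E(\R)$ solving $(\textup{TW}_c)$ for almost every $c\in(0,\sqrt{2\sigma})$.

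The main obstacle is the compactness step. Unlike the local Gross--Pitaevskii energy, the nonlocal term $\W\star(1-|v|^2)$ produces cross terms that couple well-separated pieces of a splitting configuration, so one must quantify their decay carefully for the ``no vanishing'' and ``no dichotomy'' alternatives to go through, and the strict inequalities of the first paragraph (which rest decisively on $\sigma>0$) must be strong enough to absorb the resulting error terms. A secondary technical point, already present in the local theory, is the proper renormalization of $P$ on $\mathcal E(\R)$ and the computation of $P'$, needed to make both the constraint and the multiplier argument rigorous.
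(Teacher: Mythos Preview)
Your proposal takes a genuinely different route from the paper's own proof, and it also contains a real gap.

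The paper's proof (in the Appendix) is a three-line reduction to Theorem~1.1 of \cite{dLMar2022}: for any $\tilde\sigma<\sigma$ one chooses $k\in(0,1/2)$ and checks algebraically that $\widehat\W(\xi)\ge\tilde\sigma-k\xi^2$ for a.e.\ $\xi$, which is precisely the hypothesis of that theorem; the conclusion for a.e.\ $c\in(0,\sqrt{2\tilde\sigma})$ then follows as a black box, and letting $\tilde\sigma\uparrow\sigma$ finishes. No variational machinery is reproduced. Moreover, the underlying existence result in \cite{dLMar2022} is obtained by a \emph{mountain-pass} argument on the functional $J_c$ in the $\eta$-variable (cf.\ the remark in Section~\ref{sec:stability}: ``the mountain-pass theorem used to prove Theorem~\ref{thm:dLMar}''), not by minimizing $E$ at fixed momentum.

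Your fixed-momentum minimization sketch is closer in spirit to \cite{delaire-mennuni}, and its key structural input---concavity of $\gq\mapsto\Emin(\gq)$---is precisely what is \emph{not} available under the bare hypotheses here. The paper states this explicitly: Theorem~4 of \cite{delaire-mennuni} \emph{assumes} concavity, it is \emph{proved} only for the specific potential \eqref{eq:potSalvador}, and for the remaining examples it is merely \emph{conjectured}. Without concavity you lose strict subadditivity, hence the dichotomy exclusion, hence compactness; and your final step (that the a.e.\ values of $2\Emin'$ fill $(0,\sqrt{2\sigma})$ up to a null set) also rests on monotonicity of $\Emin'$, i.e.\ again on concavity. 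So the argument, as written, does not go through under the sole assumption \eqref{def:sigma}. The secondary claims (the Landau bound $\Emin(\gq)\le\tfrac{\sqrt{2\sigma}}{2}\gq$, and $\Emin'\to0$ at the ``upper end'') would also need justification that is not supplied by the hypotheses of the theorem.
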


Furthermore, by imposing some extra conditions on $\W$, 
it is proved in \cite{dLMar2022} that the existence  holds true \textit{for every} $c\in (0,\sqrt{2\sigma})$.
These kinds of conditions are satisfied, for instance, by the Dirac delta function $\W=\delta_0$, so that $\widehat \W=1$.
%Thus, the general results in \cite{dLMar2022} yield 
%the existence solitons for every $c\in(0,\sqrt 2)$} {\color{blue} in this case}. 
%Moreover, Theorem~\ref{thm:dLMar} gives the nonexistence of finite energy solutions for the critical speed $c=\sqrt 2$.
This agrees with the results in 
\cite{bethuel2008existence}, where the authors show that for $c\in[0,\sqrt2 )$, the unique solutions, up translation and multiplication by a complex constant of modulus one, are explicitly given by
	\begin{equation}
	\label{sol:1D}
	u_{c}(x)=\sqrt{\frac{2-c^2}{2}}\tanh\Bigg(\frac{\sqrt{2-c^2}}{2}x\Bigg)-i\frac{c}{\sqrt{2}}.
\end{equation}
In addition, they show that if $c\geq\sqrt{2}$, the only solutions to \eqref{eq:BCGP} are the trivial ones.
%, so that $c_s=\sqrt 2$ is the critical speed, usually called the sonic speed. 

Concerning the nonlocal equation \eqref{eq:GP},
as explained in \cite{delaire-mennuni},
the Bogoliubov dispersion relation  is given by 
\begin{equation}\label{bogo}
	\omega(\xi)=\sqrt{\xi^4+2\widehat \W(\xi) \xi^2}.
\end{equation}
Thus, assuming that $\widehat \W$ is continuous and non-negative in $0$,
we get $\omega(\xi)\approx {(2\wh \W(0))}^{1/2}\abs{\xi}$, for $\xi\approx 0$.
We infer that the value of the sonic speed is
\begin{equation}
  \label{eq:sonicspeed}
  c_s(\W) \underset{{\rm def}}{:=} \lim_{\xi\to 0} \frac{\omega(\xi)}{\xi} = (2\wh \W(0))^{1/2}.
\end{equation}
With our normalization $\wh \W(0)=1$, the sonic speed is hence set to $c_s=\sqrt 2$. 

In \cite{dLMar2022}, the authors proved that 
if   $\wh \W$ 	is smooth in a neighborhood of the origin, with  $\wh\W\geq 0$ a.e.\ on $\R$,   	$\wh \W(0)=1$ and  $(\wh \W)''(0)\neq -1$, then  \eqref{eq:TWc} admits no nontrivial solution for the sonic speed $c_s=\sqrt 2$.
%The existence or (nonexistence) of solution for $c>c_s$ remains  an open question.  

Another important concept in the study of superfluids is the Landau's critical speed,
that explains the existence of excitations in a superfluid called {\it rotons}.
Mathematically, the Landau speed is defined as
\begin{equation}
	\label{def:landau}
c_L(\W)\underset{{\rm def}}{:=}\inf_{\R}\frac{\omega (\xi)}{\abs{\xi}}, 
\end{equation}
and the Landau's criterion is that there exists a point $\xi_{\rot}>0$,
that we call {\it roton minimum}, such that $c_L(\W)={\omega (\xi_\rot)}/{\abs{\xi_\rot}}.$
Thus, if $\omega$ is differentiable at $\xi_\rot>0$,
a roton minimum $\xi_\rot$ is a point where the group velocity $\omega'(\xi_\rot)$
and the phase velocity $\omega(\xi_\rot)/\xi_\rot$ are equal.
In classical weakly interacting BEC, the Landau's speed and sonic speed are equal
$c_L(\W)=c_s(\W)$,
while in some superfluids such as  $\phantom{}^4$He,
the Landau's speed is smaller than the speed of sound \cite{barenghi2001quantized}.
In the physical literature, it is expected that when  $c_L(\W)<c_s(\W)$, 
solitons solution should include some 
additional density oscillations around the vortex, due to the presence of a roton minimum. 
Moreover, Berloff and Roberts provided some formal arguments in higher dimensions,
indicating that the condition $c<c_L(\W)$ should be necessary to obtain nontrivial solitons
\cite{berloff0}.

To give an idea of the existence of a roton minimum, let us consider
the Gaussian potential $\wh \W_\lambda(\xi)=e^{-\lambda^2\xi^2}$.
If $\lambda\in [0,\sqrt{2}/2)$, the dispersion curve $\xi\mapsto \omega(\xi)$ is convex,
we have $c_L(\W_\lambda)=c_s=\sqrt 2$,
and the aspect of this curve is similar to the one depicted in the left panel
of Figure~\ref{fig:dispersion}. 
In contrast, if $\lambda>\sqrt{2}/2$, $\omega$ is concave near the origin and there is a roton minimum
$\xi_\rot>0$ as depicted in the right panel in Figure~\ref{fig:dispersion},
where the slope of the green line corresponds to the Landau speed. 

In this context, we can 
recast Theorem~\ref{thm:dLMar} as follows
 \begin{corollary}
\label{cor:dLMar}
Let  $\W$ as in Theorem~\ref{thm:dLMar}.
Then the Landau's critical speed  is given by 
 		\begin{equation}
 			\label{landau}
 			c_L(\W)=\sqrt{2\sigma},
 		\end{equation}	
and, for almost every $c\in (0,c_L(\W))$,
there exists a nontrivial solution $u\in \mathcal E(\R)$ to \eqref{eq:TWc}.
\end{corollary}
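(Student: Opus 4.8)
The plan is to derive the formula $c_L(\W)=\sqrt{2\sigma}$ by a direct computation from the definitions \eqref{bogo} and \eqref{def:landau}, and then to read off the existence statement as an immediate reformulation of Theorem~\ref{thm:dLMar}.

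First I would rewrite the quotient $\omega(\xi)/\abs{\xi}$ appearing in \eqref{def:landau}. For $\xi\neq 0$, factoring $\xi^2$ out of the radicand in \eqref{bogo} gives
\begin{equation*}
  \frac{\omega(\xi)}{\abs{\xi}}=\frac{\sqrt{\xi^4+2\widehat\W(\xi)\xi^2}}{\abs{\xi}}=\sqrt{\xi^2+2\widehat\W(\xi)}=\sqrt{2\big(\widehat\W(\xi)+\xi^2/2\big)}.
\end{equation*}
Here one should note that, since $\sigma\in(0,1]$, the hypothesis \eqref{def:sigma} forces $\widehat\W(\xi)+\xi^2/2\geq\sigma>0$ for every $\xi\in\R$, so that the radicand is positive and $\omega$ is a well-defined real-valued function; in particular all the square roots above are legitimate.

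Next, taking the infimum over $\xi\neq 0$ and using that $t\mapsto\sqrt t$ is continuous and increasing on $[0,\infty)$ (hence commutes with infima of nonnegative functions), together with \eqref{def:sigma}, I obtain
\begin{equation*}
  c_L(\W)=\inf_{\xi\neq 0}\sqrt{2\big(\widehat\W(\xi)+\xi^2/2\big)}=\sqrt{2\inf_{\xi\in\R}\big(\widehat\W(\xi)+\xi^2/2\big)}=\sqrt{2\sigma},
\end{equation*}
which is \eqref{landau}. The only point requiring a word of care is the value $\xi=0$, where the quotient $\omega(\xi)/\abs{\xi}$ is not literally defined: by continuity of $\widehat\W$ it extends there to the value $\sqrt{2\widehat\W(0)}=\sqrt 2$ (the sonic speed), and since $\sqrt2\geq\sqrt{2\sigma}$ because $\sigma\leq 1$, including or excluding the origin does not change the infimum. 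Thus \eqref{landau} holds in either reading of \eqref{def:landau}.

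Finally, since $\sigma>0$ the interval $(0,\sqrt{2\sigma})$ is nonempty, and the hypotheses on $\W$ are exactly those of Theorem~\ref{thm:dLMar}; applying that theorem and substituting $\sqrt{2\sigma}=c_L(\W)$ from \eqref{landau} yields the existence of a nontrivial solution $u\in\mathcal E(\R)$ to \eqref{eq:TWc} for almost every $c\in(0,c_L(\W))$. The whole argument is elementary, the substantive content being entirely contained in Theorem~\ref{thm:dLMar}, so I do not expect a genuine obstacle beyond the minor bookkeeping at the origin mentioned above.
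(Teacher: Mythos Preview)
Your proof is correct and matches the paper's treatment: the paper presents Corollary~\ref{cor:dLMar} explicitly as a recasting of Theorem~\ref{thm:dLMar}, with no separate argument given, so the only content is the elementary identity $\omega(\xi)/|\xi|=\sqrt{2(\widehat\W(\xi)+\xi^2/2)}$ that you spell out. Your handling of the point $\xi=0$ is a reasonable bit of bookkeeping that the paper leaves implicit.
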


\begin{figure}
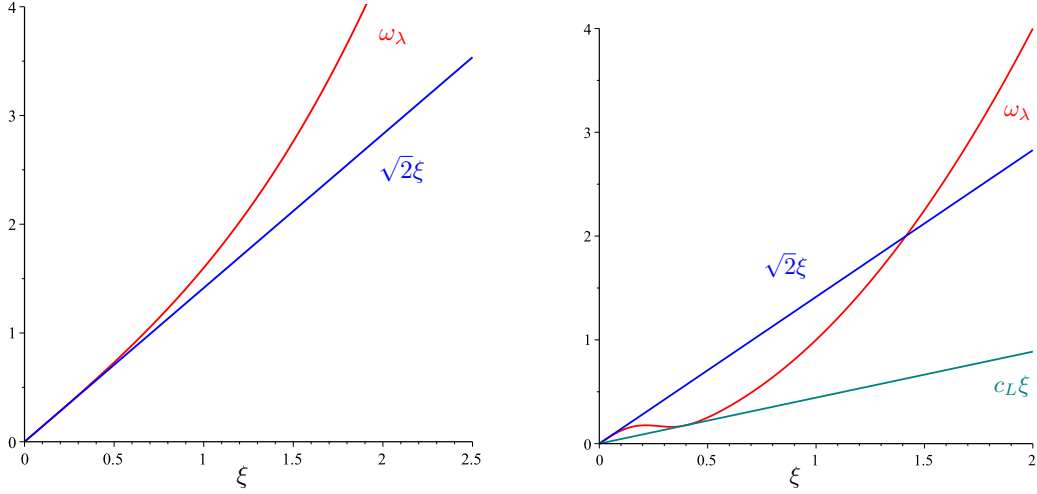

%	\begin{center}
		\begin{tabular}{cc}
	\resizebox{0.45\textwidth}{!}{
\begin{overpic}
		%				[scale=0.,grid,tics=5,trim=60 500 300 50,clip]{Figures/dispersion/E1-b_015-lambda_005}
	[trim=60 540 350 50,clip]{Figures/dispersion/dispersion-without-roton}
		\put(50,0){{$\xi$}}
		\put(78,87){{$\color{red}\omega_\lambda$}}
		\put(78,60){{$\color{blue}\sqrt 2 \xi$}}
	\end{overpic}
}
&
\resizebox{0.45\textwidth}{!}{
	\begin{overpic}
		[trim=50 540 350 50,clip]{Figures/dispersion/dispersion-with-roton}
		\put(50,0){{$\xi$}}
		\put(92,72){{$\color{red}\omega_\lambda$}}
		\put(45,41){{$\color{blue}\sqrt 2 \xi$}}
		\put(90,18){{$\color{teal} c_L  \xi$}}
	\end{overpic}
}
			
%%			\hspace*{-0.6cm}
%			{\scalebox{0.75}{\includegraphics[trim={1cm 18cm 10cm 4cm}, clip]{dispersion-without-roton}}}    
%			&
%%			\hspace*{-1.45cm}
%			{\scalebox{0.75}{\includegraphics[trim={2cm 18cm 10cm 4cm}, clip]{dispersion-with-roton}}}
		\end{tabular}
%		% trim={<left> <lower> <right> <upper>}
%	\end{center}
	\caption{Dispersion curve $\xi\mapsto \omega_\lambda (\xi)$ (red),
          sound speed curve $\xi\mapsto \sqrt 2\times \xi$ (blue),
          and Landau speed curve $\xi\mapsto c_L \times \xi$ (green),
          for the Gaussian potential $\wh \W(\xi)=e^{-\lambda^2\xi^2}$ with $\lambda=1/2$ (left panel),
          and $\lambda=5$ (right panel).
          In the left panel, the sonic speed $c_s=\sqrt2$ is equal to the Landau speed $c_L$.}
	\label{fig:dispersion}
\end{figure}

%The existence or (nonexistence) of solution for $c>c_s$ remains  an open question.  

As seen in Lemma~\ref{lemma:regularity},  if  $c>0$, then 
solutions to \eqref{eq:TWc} do not vanish, i.e.\  $\abs{u(x)}>0$, for all $x\in\R$. 
Moreover, as shown in \cite{dLMar2022}, for $c>0$, the change of variables, 
$\eta=1-\abs{u}^2$ allows us   to recast  \eqref{eq:TWc} as a single real-valued equation
\begin{equation}
	\label{eq:zeroJ}
	J_c(\eta,\lambda)=0,
\end{equation}
where
\begin{equation}
	\label{eq:defJc}
	J_c(\eta,\lambda) := \eta''-2{\mathcal W}_\lambda\star \eta + c^2\eta + \frac{c^2\eta^2}{2(1-\eta)}
	+\frac{(\eta')^2}{2(1-\eta)} + 2({\mathcal W}_\lambda\star \eta)\eta.
\end{equation}
Notice that in \eqref{eq:defJc}, instead of a single potential $\W$,
we have introduced a family of potentials $\W_\lambda$ indexed by a real parameter $\lambda$.
This will allow us to relate the nonlocal equation \eqref{eq:zeroJ} to the local one.
In fact, in several examples below, we will choose the family $\W_\lambda$ such that $\W_0=\delta_0$.

By Lemma~\ref{lemma:regularity}, if $u$ is a solution to \eqref{eq:TWc} with $c>0$, then $\eta<1$ on $\R$, so that $J_c$ is well-defined.
Also, we can recover $u$ from $\eta$  by setting 
\begin{equation}
	\label{def:u}
	u(x)=\sqrt{1-\eta(x)}e^{i\theta(x)}, \quad \textup{where}\quad  \theta(x)=\frac{c}2\int_0^x \frac{\eta(s)}{1-\eta(s)}{\rm d}s.
\end{equation}
Moreover, if $\eta$ solves \eqref{eq:zeroJ}, then  the momentum \eqref{momentum} and the energy \eqref{energy}
can be computed as follows (see \cite{dLMar2022} for details) 
\begin{align}
	\label{eq:Eeta}
	E(\eta) &= \frac{c^2}{8} \int_{\R}\frac{\eta(x)^2}{1-\eta(x)} \dd x
	+ \frac{1}{8} \int_{\R} \frac{\eta'(x)^2}{1-\eta(x)} \dd x
	+ \frac{1}{4} \int_{\R} \left({\mathcal W}_\lambda\star\eta\right)(x)\eta(x) \dd x,\\
	\label{eq:Peta}
	P(\eta) &= \frac{c}{4} \int_{\R} \frac{\eta(x)^2}{1-\eta(x)} \dd x.
\end{align}

For instance, in these new variables,  the soliton $u_c$ in \eqref{sol:1D}, 
associated with the solution to \eqref{eq:TWc}  when $\mathcal W=\delta_0$, 
reads, for $c\in(0,\sqrt 2)$,
\begin{equation}
	\label{eq:defeta0}
	\eta_c(x) = \frac{2-c^2}{2} \sech^2\left(\frac{\sqrt{2-c^2}}{2}x\right), 
	\quad \text{and}\quad 
\theta_c(x)=\arctan
\Bigg(
\frac{\sqrt{2-c^2}}{c}
\tanh\Bigg( \frac{\sqrt{2-c^2}}{2} x
\Bigg)
\Bigg).
\end{equation}

The main purpose of this article is to provide a numerical scheme to obtain approximate solutions to
\eqref{eq:TWc} for some $c>0$ and to implement this method to provide actual numerical
experiments providing us with families of such dark solitons for several physically relevant
potentials $\mathcal W$.
To do so, we introduce in Section \ref{sec:examples} six families of physically relevant
potentials $\mathcal W_\lambda$.
We also analyze the corresponding dispersion relations, sonic speeds and Landau speeds,
and also  discuss the existence of roton minima.
Section \ref{sec:stability} is devoted to a stability criterion which
ensures orbital stability of dark solitons.
We describe the numerical method we propose in Section \ref{sec:discr} for the computation
of approximation of dark solitons ({\it i.e.} solutions to \eqref{eq:TWc}).
This method uses the equivalent equation \eqref{eq:zeroJ} in $\eta$,
which is first reduced to a bounded interval with homogeneous Dirichlet boundary conditions
thanks to the decay properties (Lemma \ref{lemma:regularity}), and then discretized
by finite differences.
The corresponding discrete analog to \eqref{eq:zeroJ} is then solved numerically
by minimizing a residue by gradient descent.
This allows us to obtain numerical results in Section \ref{sec:num} for the
six families $\W_\lambda$ of nonlocal potentials introduced in Section \ref{sec:examples}.
In particular, this gives insight into the profiles of dark solitons to \eqref{eq:TWc},
and allows for an investigation of the role of the Landau speed,
and the stability of solitons (in connection with Section \ref{sec:stability}).

%For this purpose, we will 
%we consider several families of potentials ${\mathcal W}={\mathcal W}_\lambda$, indexed by some $\lambda\geq 0,$ with ${\mathcal W}_0=\delta_0$ is the reference potential.

\section{Examples of interaction potentials}
\label{sec:examples}
We consider six examples of interacting potentials $\W_\lambda$ studied in \cite{dLMar2022}.
They fulfill the hypotheses of Theorem~\ref{thm:dLMar} and Corollary~\ref{cor:dLMar}.
More precisely, they are even tempered distributions, with 
$   \widehat\W\in L^\infty(\R)$ and  
$\wh \W$ 	is of class $\mathcal C^2$
in a neighborhood of the origin, satisfying  the normalization condition 
\begin{equation}
	\label{normalization}
	\widehat{\mathcal W}(0)=1.
\end{equation}
In addition, in all our examples, except the last one,
the potentials are parametrized by $\lambda \in \Lambda \subset \R$, such that $\W_0=\delta_0$.
Thus, for $\lambda=0$, some solutions to \eqref{eq:TWc} are explicitly given
by the soliton \eqref{eq:defeta0}, for speed $c\in (0,\sqrt{2})$,
that we will use later to initiate our numerical method.

We use the dispersion curve $\omega$ defined in \eqref{bogo}, and the Landau speed
in \eqref{def:landau}.
When the potential is parametrized by $\lambda\in\Lambda$,
we denote for simplicity $\omega_\lambda$ the corresponding dispersion curve and
$$c_L(\lambda)=c_L(\W_\lambda).$$

We remark that the sign of $\omega''(\xi)$, for small positive $\xi$
corresponds to the sign of $1+(\wh \W)''(\xi)$.
Therefore, if $(\wh \W)''(0)>-1$, then  the curve $\omega$ is  convex close to the origin,
and lies above the tangent  $\sqrt 2\xi $, for small $\xi>0$.  
If $(\wh \W)''(0)<-1$, then  the curve $\omega$ is  concave close to the origin,
and lies below the tangent  $\sqrt 2\xi$, for small $\xi>0$.
See for instance the potentials in Figure~\ref{fig:dispersion}.

%\begin{equation}
	%\label{eq:defeta0}
	%\eta_c(x) = \frac{2-c^2}{2} \sech^2\left(\frac{\sqrt{2-c^2}}{2}x\right).
%\end{equation}

%The examples we use in the numerical experiments are the following.

\medskip

\noindent{\bf Example 1}. 
Let $\beta>0$  and $\lambda \in (-\infty,\beta/2)$, we consider 
\begin{equation}
	\label{eq:potSalvador}
	\tag{E1}
	{\mathcal W}_\lambda (x)
	=\frac{\beta}{\beta-2\lambda} \left(\delta_0 - \lambda {\rm e}^{-\beta |x|}\right), \ x\in\R,
	 \quad \text {i.e.\  }\quad
	 	\widehat{\mathcal W}_\lambda(\xi)
	 = \frac{\beta}{\beta-2\lambda}\Big(1-\frac{2\lambda\beta}{\xi^2+\beta^2}\Big), \ \xi\in\R.
\end{equation}
This kind of potential has been used in \cite{nikolov2004}
for the study of dark solitons in a self-defocusing nonlocal Kerr-like medium.
If $\lambda>0$, the potential $\W_\lambda$ represents a strong repulsive interaction between particles
that coincide in space, while the interaction becomes attractive otherwise,
being this attraction more significant at short distances.
In contrast, for $\lambda<0$, the potential $\W_\lambda$ is purely repulsive.
According to Theorem~\ref{thm:dLMar} and Corollary~\ref{cor:dLMar}, for this potential,
we distinguish the following cases:
\begin{enumerate}
	\item If $\beta\geq\sqrt{2}$, then there is a soliton for a.e.\ $c\in (0,\sqrt{2})$ and for every $\lambda\in (-\infty,\beta/2)$.
	\item If $\beta\in (0,\sqrt{2})$, then there is a soliton for a.e.\ $c\in (0,c_L(\lambda))$ and for every $\lambda\in (-\infty,\beta/2)$, where $c_L(\lambda)=\sqrt{2\sigma(\lambda)}$ and 
          \begin{equation}
            \label{eq:defsigma}
	\sigma(\lambda)=\begin{cases}
		1 & \text{ if }-\frac{\beta^3}{2(2-\beta^2)}<\lambda<\frac{\beta}{2},
		\\
		\displaystyle\frac{\beta(1+\sqrt{-\lambda(\beta-2\lambda)})}{\beta-2\lambda}+\beta\sqrt{\frac{-\lambda}{\beta-2\lambda}}-\frac{\beta^2}{2} & \text{ if } \lambda\leq-\frac{\beta^3}{2(2-\beta^2)}.
\end{cases}
\end{equation}
\end{enumerate}
In particular, in this second case, as a function of $\lambda$, $c_L$ is continuous,
positive and  $\lim_{\lambda\to-\infty}c_L(\lambda)=(\beta(2\sqrt{2}-\beta))^{1/2}$.
Notice that the critical value $-{\beta^3}/({2(2-\beta^2)})$, corresponds to the zero of the function $1+(\wh \W_\lambda)''$.

In Figure~\ref{dispersion:E1}, we depict the dispersion curve $\omega_\lambda$
for two values of $(\beta,\lambda)$.
In the left panel, we have  $\beta=0.15$ and $\lambda=0.05$, so that $\wh\W_\lambda''(0)>-1 $,
the function $\omega$ is convex and its graph lies above that of the tangent at the origin
$\xi\mapsto \sqrt 2\xi$. There is no roton minimum and the Landau speed is $c_L=\sqrt 2$.
In the center panel, we have  $\beta=0.5$ and $\lambda=-1$, so that $\wh\W_\lambda''(0)<-1 $,
and $\omega_\lambda$ is concave near the origin, and is below the tangent $\sqrt 2\xi$,
for $\xi\lesssim 1$. In this case, the Landau speed $c_L\sim 1.19$ is less than $\sqrt 2$
and there is a roton for $\omega_\lambda$.
For the sake of clarity, the right panel shows the curve $\omega_\lambda(\xi)/\xi$
for $\beta=0.5$ and $\lambda=-1$.

\begin{figure}[ht!]
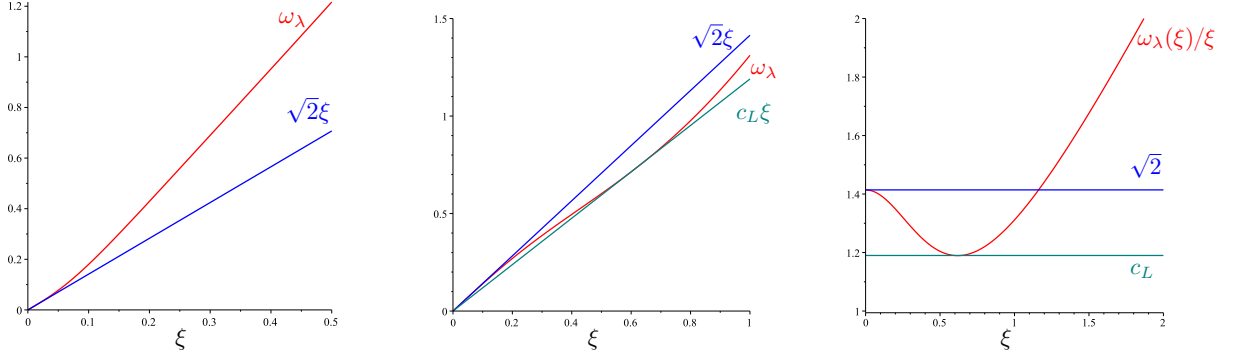

\begin{tabular}{ccc}
	\resizebox{0.33\textwidth}{!}{
		\begin{overpic}
%				[scale=0.75,grid,tics=5,trim=60 500 300 50,clip]{Figures/dispersion/E1-b_015-lambda_005}
       [scale=0.6,trim=60 500 300 50,clip]{Figures/dispersion/E1-b_015-lambda_005}
       	\put(50,-1){{$\xi$}}
       	\put(78,86){{$\color{red}\omega_\lambda$}}
       	 \put(80,60){{$\color{blue}\sqrt 2 \xi$}}
			\end{overpic}
		}
				&
					\resizebox{0.33\textwidth}{!}{
						 	\begin{overpic}
			[scale=0.6,trim=50 500 300 50,clip]{Figures/dispersion/E1-b_05-lambda_-1}
	       	\put(50,-1){{$\xi$}}
	       	\put(94,72){{$\color{red}\omega_\lambda$}}
			\put(78,80){{$\color{blue}\sqrt 2 \xi$}}
			\put(90,60){{$\color{teal} c_L \xi$}}
				\end{overpic}
			}
			& 
				\resizebox{0.33\textwidth}{!}{
						\begin{overpic}
			[scale=0.6,trim=50 500 300 50,clip]{Figures/dispersion/E1-b_05-lambda_-1-bis}
				       	\put(50,-1){{$\xi$}}
			\put(87,80){{$\color{red}\omega_\lambda(\xi)/\xi$}}
			\put(85,45){{$\color{blue}\sqrt 2 $}}
			\put(85,18){{$\color{teal} c_L$}}
						\end{overpic}
		}
\end{tabular}
\caption{Dispersion curves $\xi\mapsto \omega_\lambda(\xi)$ for potential \eqref{eq:potSalvador}.
  Left: $\beta=0.15$ and $\lambda=0.05$.
  Center and right: $\beta=0.5$ and $\lambda=-1$, so that $c_L \sim 1.19$.}
\label{dispersion:E1}
\end{figure}

% \clearpage 
\noindent{\bf Example 2}. 
	Another interesting example proposed in \cite{Lopez-Aguayo} is the Gaussian function, for $\lambda\in\R\setminus\{0\}$,
\begin{equation}
  \label{eq:potGaussien}
  \tag{E2}
  {\mathcal W}_\lambda(x) = \frac{1}{2|\lambda|\sqrt{\pi}} {\rm e}^{-\frac{x^2}{4\lambda^2}},\ x\in\R,
	 \quad \text {i.e.\  }\quad
    \widehat{\mathcal W_\lambda}(\xi)
    = {\rm e}^{-\lambda^2\xi^2},\  \xi\in\R,
\end{equation}
with $\mathcal W_0=\delta_0$ and $\widehat{\mathcal W}_0\equiv 1$.
This potential is a classical model for a smooth approximation of the Dirac delta function, as 
$\lambda$ goes to zero.
The function $ 1+  (\widehat{\mathcal W}_\lambda)''$ vanishes at $\lambda^*=\sqrt{2}/2$.
According to Corollary~\ref{cor:dLMar}, for $\lambda \in[0,1/\sqrt{2})$,
we have existence for almost every $c\in(0,\sqrt 2)$.
Also, for $\lambda\geq \sqrt{2}/2$, we have existence of solitons for almost every 
$c\in (0,c_L(\lambda))$,  where 
\begin{equation}
	\label{sigma:lambda}
	c_L(\lambda)=\frac1{\lambda}{\sqrt{1+\ln(2\lambda^2)}}.
\end{equation} 

In Figure~\ref{dispersion:E2}, we depict the dispersion curve $\omega_\lambda$ for two values
of $\lambda$.
In the left and center panels, we have  $\lambda=1$, there is a roton minimum and $c_L\sim 1.3$,
so the Landau speed is very close to the speed of sound.
In the right panel, we have  $\lambda=3$, there is also a roton minimum and   $c_L\sim 0.66$.

\begin{figure}[ht!]
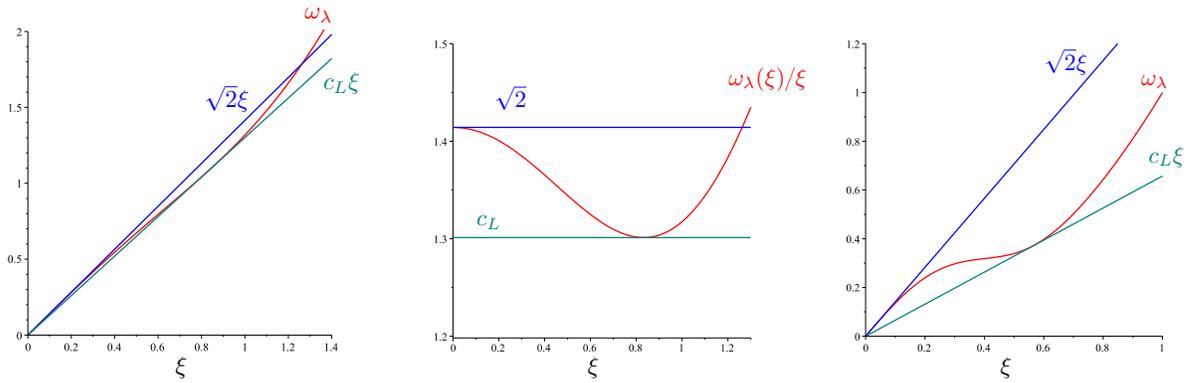

	\begin{tabular}{ccc}
		\resizebox{0.33\textwidth}{!}{
			\begin{overpic}
				%				[scale=0.75,grid,tics=5,trim=60 500 300 50,clip]{Figures/dispersion/E1-b_015-lambda_005}
				[scale=0.6,trim=60 500 300 50,clip]{Figures/dispersion/E2-lambda_1}
				\put(50,-2){{$\xi$}}
				\put(85,94){{$\color{red}\omega_\lambda$}}
				\put(58,70){{$\color{blue}\sqrt 2 \xi$}}
				\put(90,75){{$\color{teal} c_L \xi$}}
			\end{overpic}
		}&\resizebox{0.33\textwidth}{!}{
			\begin{overpic}
				[scale=0.6,trim=50 500 300 50,clip]{Figures/dispersion/E2-lambda_1-bis}
				\put(50,-2){{$\xi$}}
				\put(88,76){{$\color{red}\omega_\lambda(\xi)/\xi$}}
				\put(25,70){{$\color{blue}\sqrt 2 $}}
				\put(20,38){{$\color{teal} c_L$}}
			\end{overpic}
		}& 	\resizebox{0.33\textwidth}{!}{
			\begin{overpic}
				[scale=0.6,trim=50 500 300 50,clip]{Figures/dispersion/E2-lambda_3}
				\put(50,-2){{$\xi$}}
				\put(88,76){{$\color{red}\omega_\lambda$}}
				\put(62,80){{$\color{blue}\sqrt 2 \xi$}}
				\put(90,55){{$\color{teal} c_L \xi$}}
			\end{overpic}		}
	\end{tabular}
	\caption{Dispersion curves $\xi\mapsto \omega_\lambda(\xi)$ for potential 
	\eqref{eq:potGaussien}.
          Left and center: $\lambda=1$ so that $c_L\sim 1.3$.
          Right: $\lambda=3$ so that $c_L \sim 0.66$.}
	\label{dispersion:E2}
\end{figure}

\noindent{\bf Example 3}.
The next potential, so-called rectangular potential, was used in \cite{aftalion-blanc}
to study supersolids, and also in \cite{Kong-Wang-10} as a model in nonlocal materials.
In addition, it can be seen as a nonsmooth approximation of the Dirac delta when $\lambda>0$ is small,
\begin{equation}
  \label{eq:potunif}
  \tag{E3}
  {\mathcal W_\lambda}(x) = 
  \left\{
    \begin{matrix}
      \frac{1}{2|\lambda|} & \quad \text{if } |x|\leq |\lambda|\\
      0 & \quad \text{otherwise},
    \end{matrix}
  \right.
  \ x\in\R, 
  \quad \text {i.e.\  }\quad
  \widehat{\mathcal W}_\lambda (\xi)= {\rm sinc}( \lambda\xi )=\frac{\sin(\lambda\xi)}{\lambda\xi}, \ \xi\in\R.
\end{equation}
In this case, as a function of $\lambda$, the function $ 1+  (\widehat{\mathcal W}_\lambda)''(0)$
vanishes at $\lambda^*=\sqrt 3$.
Moreover, the Landau speed $c_L(\lambda)$ is less than $\sqrt 2$ for $\lambda>\sqrt 3$.
Note that $\lambda=2$ yields a Landau speed of $c_L(2)\sim 1.374$,
and $\lambda=4.5$, yields a Landau speed of $c_L(4.5) \sim 0.624$.
In Figure~\ref{dispersion:E3}, we show the respective dispersion curves:
In the left and center panels, we see that for $\lambda=2$, there is a roton minimum.
In the right panel, we  see a roton minimum for $\lambda=4.5$.
%; the difference between the speed of sound and the Landau speed is approximately equal to 0.79.

\begin{figure}[ht!]
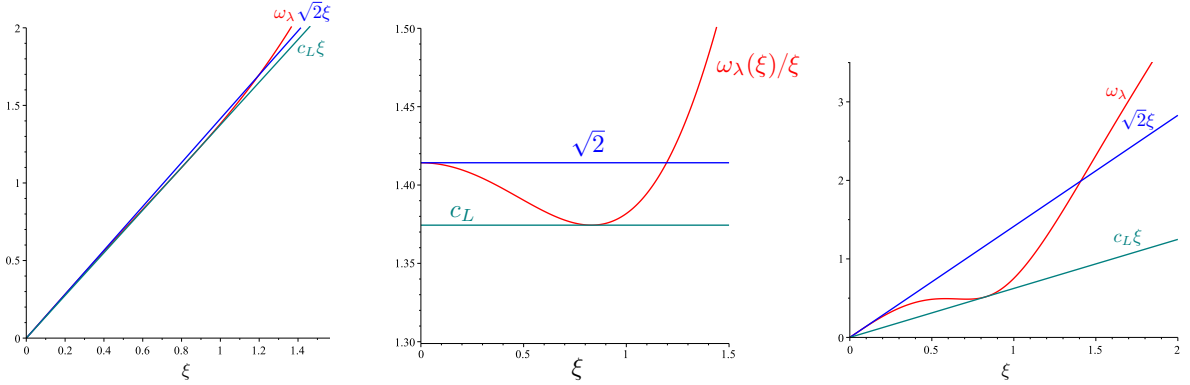

	\begin{tabular}{ccc}
		\resizebox{0.30\textwidth}{!}{
			\begin{overpic}
				%				[scale=0.75,grid,tics=5,trim=60 500 300 50,clip]{Figures/dispersion/E1-b_015-lambda_005}
				[scale=0.6,trim=60 400 250 50,clip]{Figures/dispersion/E3-lambda_2}
				\put(50,5){{$\xi$}}
				\put(74,98){{$\color{red}\omega_\lambda$}}
				\put(80,98){{$\color{blue}\sqrt 2 \xi$}}
				\put(80,89){{$\color{teal} c_L\xi $}}
			\end{overpic}
		}&\resizebox{0.36\textwidth}{!}{
			\begin{overpic}
				[scale=0.6,trim=55 486 290 40,clip]{Figures/dispersion/E3-lambda_2-bis}
				\put(50,5){{$\xi$}}
				\put(86,80){{$\color{red}\omega_\lambda(\xi)/\xi$}}
				\put(50,60){{$\color{blue}\sqrt 2 $}}
				\put(20,44){{$\color{teal} c_L$}}
			\end{overpic}
		}& 	\resizebox{0.33\textwidth}{!}{
			\begin{overpic}
				[scale=0.6,trim=54 400 230 100,clip]{Figures/dispersion/E3-lambda_45}
								\put(50,5){{$\xi$}}
				\put(78,80){{$\color{red}\omega_\lambda$}}
				\put(82,72){{$\color{blue}\sqrt 2 \xi$}}
				\put(80,41){{$\color{teal} c_L \xi$}}
		\end{overpic}		}
	\end{tabular}
	\caption{Dispersion curves $\xi\mapsto \omega_\lambda(\xi)$ for potential \eqref{eq:potunif}.
          Left and center: $\lambda=2$, so that $c_L\sim 1.374$.
          Right: $\lambda=4.5$, so that $c_L \sim 0.624$.}
	\label{dispersion:E3}
\end{figure}

% \clearpage 
\noindent{\bf Example 4}.
The following potential was proposed in \cite{veskler2014} as a simple model for interactions
in a Bose--Einstein condensate.
It is given  by a contact interaction $\delta_0$ and two Dirac delta functions
centered at $\pm \lambda$, as
\begin{equation}
  \label{eq:pot3Dirac}
  \tag{E4}
  {\mathcal W_\lambda} = 2 \delta_0 - \frac12\left(\delta_{-\lambda} + \delta_{\lambda}\right),
  \quad \text {i.e.\  }\quad
  \widehat{\mathcal W}_\lambda (\xi)=2 - \cos(\lambda\xi), \ \xi \in\R.
\end{equation}
For any $\lambda\geq 0$, we have $c_L(\lambda)=\sqrt 2$ and
Corollary~\ref{cor:dLMar} provides the existence of solitons for almost every speed $c\in (0,\sqrt 2)$. Notice the dispersion curve $\omega_\lambda$
can have inflection points as seen in Figure~\ref{dispersion:E4}. However,
the dispersion curve has no roton minimum, for any $\lambda\geq 0$.
 
\begin{figure}[ht!]
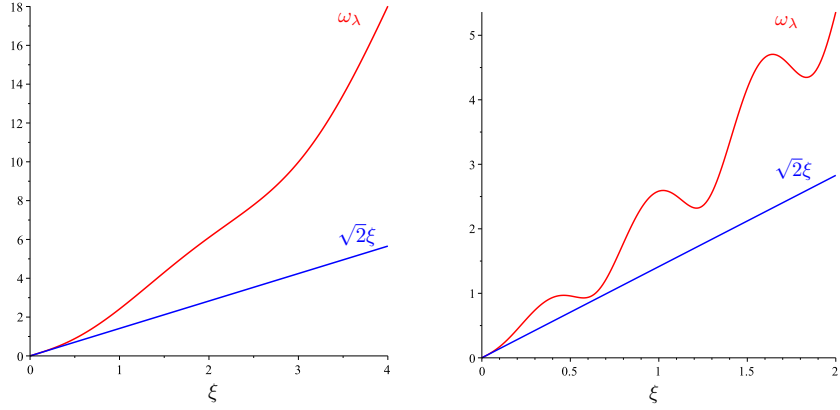

	\begin{tabular}{cc}
		\resizebox{0.36\textwidth}{!}{
			\begin{overpic}
				%				[scale=0.75,grid,tics=5,trim=60 500 300 50,clip]{Figures/dispersion/E1-b_015-lambda_005}
				[scale=0.6,trim=60 400 230 50,clip]{Figures/dispersion/E4-lambda_2}
				\put(50,5){{$\xi$}}
				\put(80,92){{$\color{red}\omega_\lambda$}}
				\put(80,41){{$\color{blue}\sqrt 2 \xi$}}
			\end{overpic}
		}& 	\resizebox{0.36\textwidth}{!}{
			\begin{overpic}
				[scale=0.6,trim=50 400 230 50,clip]{Figures/dispersion/E4-lambda_10}
					\put(50,5){{$\xi$}}
				\put(80,94){{$\color{red}\omega_\lambda$}}
				\put(80,58){{$\color{blue}\sqrt 2 \xi$}}
		\end{overpic}		}
	\end{tabular}
	\caption{Dispersion curves $\xi\mapsto \omega_\lambda(\xi)$ for potential \eqref{eq:pot3Dirac}.
          Left: $\lambda=2$. Right: $\lambda=10$.}
	\label{dispersion:E4}
\end{figure}

%{\color{green} 2023jan31 : The case $\lambda=10.0$ is integrated below. The value $c=0.25$ is
%missing and is being computed again ($L=300.0$, $N=5999$).}

% \clearpage 
\noindent{\bf Example 5}. 
As pointed out in \cite{krolikowski2000} in the context of solitons in nonlocal media, 
when the response function is narrow compared to the extent of the beam, 
it is possible to use the Taylor expansion 
$$\wh \W(\xi)\approx \wh \W(0)+\xi (\wh \W(0))'+\frac{\xi^2}{2}(\wh \W(0))''=1-\lambda \frac{\xi^2}{2},\quad \text{ with  }\lambda=\int_\R x^2\W(x)dx. $$
Here  we used \eqref{normalization}, and the fact that $\W$ is even.
In this case, $\wh \W$ is not bounded and it leads to a quasilinear equation.
In order to have a truly nonlocal model, we modify it by considering the 
the Bochner--Riesz potential  given by
\begin{equation}
	  \label{eq:potBR}
	\tag{E5}
	\widehat{\mathcal W}_\lambda (\xi)=\Big(1-\lambda\frac{\xi^2}{2}\Big)^+, \ 
\  \xi\in\R.
\end{equation}
From a mathematical point of view, the  Bochner--Riesz potential is also interesting because is not smooth in the Fourier variable. Even though we do not need to use the potential in the physical variable for our simulations, 
we give its expression to show the slow decay and oscillations at infinity:  
\begin{equation*}
  {\mathcal W_\lambda}(x) = \frac{\sqrt{\lambda}}{\pi x^2} \Big(\sqrt{\lambda}\frac{\sin\big(x\sqrt{2/\lambda}\big)}{x}-\sqrt{2}\cos\big(x\sqrt{2/\lambda}\big)\Big).
\end{equation*}
In Section \ref{sec:num}, we perform numerical simulation for $\lambda=1$ and  $\lambda=4$.
For $\lambda=1$, we get $c_L(1)=\sqrt2$.
This is a critical value, since the dispersion curve coincides with tangent $\sqrt 2\xi$,
for $\xi\in[0,\sqrt 2]$, as seen in the left panel of Figure~\ref{dispersion:E4}.
For $\lambda=4$, the Landau speed is $c_L=\sqrt{2}/2 \sim 0.7$, 
and there is a roton minimum, as seen in the right panel of Figure~\ref{dispersion:E4}.

\begin{figure}[ht!]
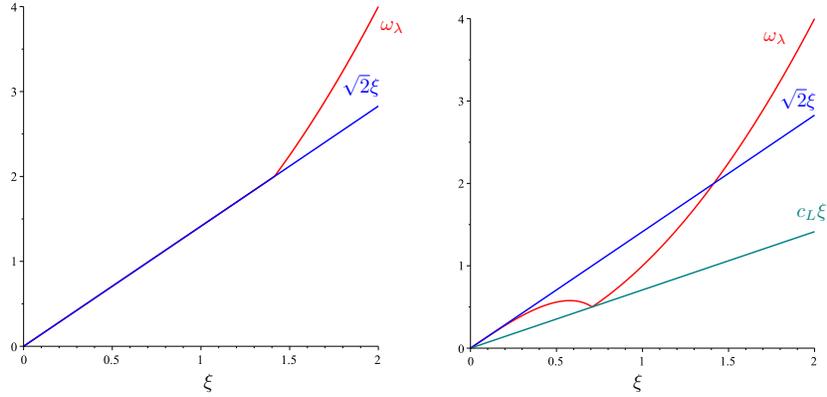

	\begin{tabular}{cc}
		\resizebox{0.35\textwidth}{!}{
			\begin{overpic}
				%				[scale=0.75,grid,tics=5,trim=60 500 300 50,clip]{Figures/dispersion/E1-b_015-lambda_005}
				[scale=0.6,trim=60 400 230 50,clip]{Figures/dispersion/E5-lambda_1}
				\put(50,5){{$\xi$}}
				\put(92,90){{$\color{red}\omega_\lambda$}}
				\put(83,75){{$\color{blue}\sqrt 2 \xi$}}
			\end{overpic}
		}& 	\resizebox{0.35\textwidth}{!}{
			\begin{overpic}
				[scale=0.6,trim=50 400 230 50,clip]{Figures/dispersion/E5-lambda_4}
								\put(50,5){{$\xi$}}
				\put(82,90){{$\color{red}\omega_\lambda$}}
				\put(86,74){{$\color{blue}\sqrt 2 \xi$}}
					\put(90,47){{$\color{teal} c_L \xi$}}
		\end{overpic}		}
	\end{tabular}
	\caption{Dispersion curves $\xi\mapsto \omega_\lambda(\xi)$ for \eqref{eq:potBR}.
          Left: $\lambda=1$. Right: $\lambda=4$, so that $c_L=\sqrt{2}/2$.}
	\label{dispersion:E5}
\end{figure}

% \clearpage 
\noindent{\bf Example 6}.
For our last example, we consider 
the potential defined by its Fourier transform
\begin{equation}
	\label{eq:potPierre}
	\tag{E6}
	\widehat {\mathcal W}(\xi) =(1+a\xi^2+b\xi^4) {\rm e}^{-c\xi^2}.
\end{equation}
This potential was proposed in \cite{berloff0,reneuve2018}
to describe a quantum fluid exhibiting a roton-maxon spectrum such as Helium~4.
In \cite{delaire-mennuni}, some numerical simulations were done for 
$a=-36$, $b=2687$, $c=30$, and a branch of solitons was found with speeds in $(0,\sqrt 2)$. Therefore, we will consider the same values of $(a,b,c)$, to compare our results with theirs.
Also, the value of $c_L$ is approximately equal to  0.596, so that  Corollary~\ref{cor:dLMar} gives the existence of solitons for almost every speed $c<c_L$.

\begin{figure}[ht!]
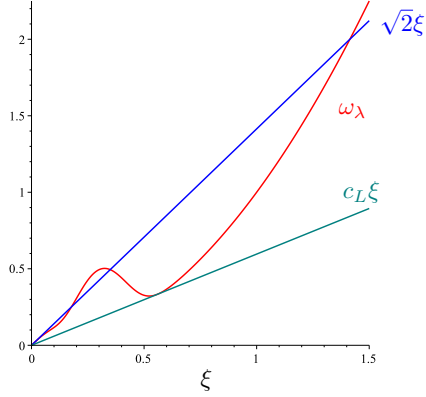

		\resizebox{0.35\textwidth}{!}{
				\begin{overpic}
				%				[scale=0.75,grid,tics=5,trim=60 500 300 50,clip]{Figures/dispersion/E1-b_015-lambda_005}
				[scale=0.5,trim=60 400 230 50,clip]{Figures/dispersion/E6}
				\put(50,5){{$\xi$}}
				\put(83,70){{$\color{red}\omega_\lambda$}}
				\put(93,90){{$\color{blue}\sqrt 2 \xi$}}
				\put(84,50){{$\color{teal} c_L \xi$}}
			\end{overpic}  }
		\caption{Dispersion curves for \eqref{eq:potPierre}, with 
$c_L \# 0.596$.}
	\label{dispersion:E6}
\end{figure}

% {\color{green} 2023Feb06 : I left the old plots with $L=400$ and $N=7999$.
%   I added new plots with $L=800$ and $N=7999$ ($c=0.1$ is missing and running again).
%   I did launch another full series of speeds with $L=800$ and $N=15999$.}

\section{Stability of dark solitons}
\label{sec:stability}

In the case  $\W=\delta_0$, where the dark solitons are explicitly given by the formulas in \eqref{eq:defeta0}, Lin showed in \cite{linbubbles} their orbital stability for $c\in (0,\sqrt 2)$
(see also \cite{chiron-stability}). His proof relied on the general  Grillakis-Shatah-Strauss 
theory \cite{GriShaStra}, that could still be applied 
to the nonlocal evolution equation \eqref{eq:GP},
to determine if the dark solitons are orbitally stable.
We now recall briefly the idea of the method in our context.
Let us fix $\W$ and assume that there exists $\mathfrak{c}>0$
such that for any  $c\in (0,\mathfrak{c})$, there is a solution
$u(c)$ to \eqref{eq:TWc}, and that the local branch 
$c\in (0,\mathfrak{c}) \to u(c)\in \mathcal E(\R) $ is $\mathcal C^1$, so that $c\in (0,\mathfrak{c}) \to \eta(c)\in H^1(\R) $ is $\mathcal C^1$, where $\eta(c)=1-\abs{u(c)}^2.$

Using \eqref{eq:Eeta} and \eqref{eq:Peta}, we can check that the functions $\tilde E(c)=E(\eta(c))$ and  $\tilde  P(c)=P(\eta(c))$ satisfy Hamilton group relation 
\begin{equation}
	\label{group}
\tilde E'(c)=c \tilde P'(c).
\end{equation}

Let us assume also that the  spectral assumptions in the 
Grillakis-Shatah-Strauss theory hold. This is probably  true  because of the mountain-pass theorem used to prove Theorem~\ref{thm:dLMar}. Then,
 the second derivative of the function
$d(c)=\tilde E(c)-c \tilde P(c)$
determines the stability. More precisely, if $d''(c)<0$, the soliton is stable. In view of \eqref{group}, we have 
$d''(c)=-\tilde P(c)$, so that the stability holds if $c\mapsto \tilde P(c)$
 has negative derivative.

In the literature, the energy-momentum $(E,P)$ diagram is preferred
to depict the local branch of solitons,
and the stability is guaranteed by the {\it strict concavity} of the map 
$ P\to  E$, assuming it is of class $\mathcal C^2$, since 
\begin{equation}
	\label{slope}
\frac{d E}{d P}=c,\quad\text{and}\quad 
\frac{d^2 E}{d P^2}=\frac{d }{dP}\left( \frac{dE }{dP} \right)=\frac{d c}{dP}.
\end{equation}}

An alternative method to establish the orbital stability was given in \cite{delaire-mennuni} by using a  minimization approach, assuming that $\W$ is an even tempered distribution with $\wh \W\in L^\infty(\R)\cap\mathcal C^3(\R)$, $\wh \W \geq 0$ on $\R$, with $\wh \W(0)=1$ such that  $\wh \W(\xi)\geq 1-\xi^2/2$, for all  $\abs{\xi}< 1/\sqrt2.$

For $\gq\geq 0$, they consider the  minimization curve
\begin{equation}
	\label{E:min}
	E_\text{min}(\gq):=\inf\{E(v)  :   v\in \boE(\R),\ p(v)=\gq \},
\end{equation}
and  set 
\begin{equation}
	\label{q_*}
	\gq_*=\sup\{\gq>0~|~\forall v \in \mathcal{E}(\mathbb{R}), \  E(v)\leq E_{\min}(\gq)\Rightarrow \underset{\mathbb{R}}\inf|v|>0\}.
\end{equation}

Theorem~4 in \cite{delaire-mennuni} establishes that if
  $\Emin$ is {\it  concave} on $\R^+$, then the set of solutions
of the minimization problem \eqref{E:min} is orbitally stable, for all $\gq\in (0,\gq_*)$. Their proof is based on the Lions-Cazenave argument. When $\W=\delta_0$, using \eqref{sol:1D}, 
it is possible to check directly that $\Emin$ is smooth on $\R^+$, strictly concave on $[0,\pi/2]$, and constant on $(\pi/2,\infty)$, 
as depicted in Figure~\ref{courbeth}.
Moreover, in view of \eqref{slope}, the slope of the tangent to the curve is exactly  $c$, for  $\gq \in (0,\pi/2)$, 
and the curve remains below the critical line $\sqrt{2} \gq.$

Therefore, both criteria explained above can be used to prove that the dark solitons are stable for every $c\in(0,\sqrt 2)$.
The case $c=0$ corresponding to the black soliton is more involved
and is beyond the scope of this work.
One problem is that the momentum \eqref{momentum} is ill-defined for perturbations of the black soliton, thus it is necessary to introduce some renormalized momentum (see \cite{BeGraSaut-stability}). 
   
\begin{figure}
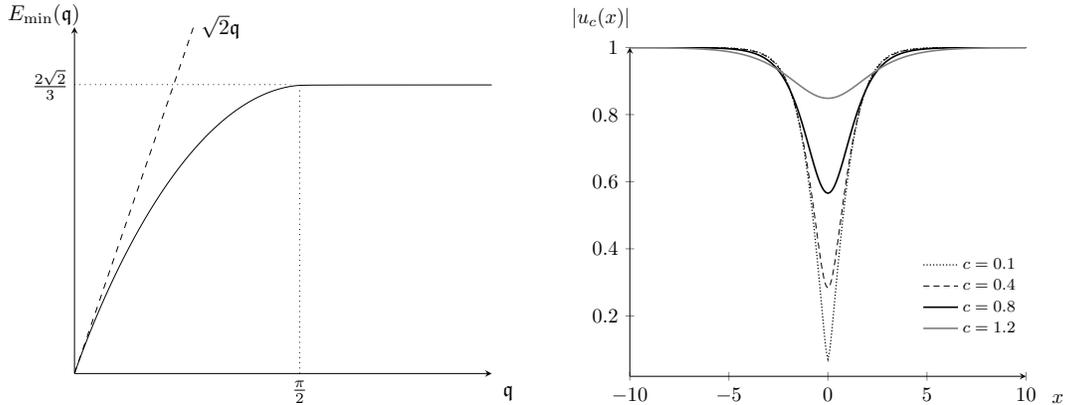

	\begin{center}
		\begin{tabular}{cc}
%			\hspace*{-0.6cm}
	\resizebox{0.45\textwidth}{!}{
	\begin{overpic}
	[trim={0.31cm 0.2cm 0.05cm 0cm}, clip]{CurveBGS}
\end{overpic}
}    
			&
%			\hspace*{-1.45cm}
	\resizebox{0.45\textwidth}{!}{
	\begin{overpic}
	[trim={0.7cm 1.1cm 0.3cm 0.2cm}, clip]{solgplocal}
\end{overpic}
 }
		\end{tabular}
		% trim={<left> <lower> <right> <upper>}
	\end{center}
	\caption{Curve $\Emin$ and solitons in the case $\W=\delta_0$.}
	\label{courbeth}
\end{figure}

Concerning the examples in Section~\ref{sec:examples}, de Laire and Mennuni were able to prove
in \cite{delaire-mennuni} that the minimization curve $\Emin$ is indeed concave
for the potential \eqref{eq:potSalvador}, which implies the stability of the solitons. 
They conjectured that in all these examples we should have  $\gq_*=\pi/2$, 
that $\Emin$ should be strictly concave on $[0,\pi/2]$, and constant on $(\pi/2,\infty)$.
This was corroborated by some numerical simulations in \cite{delaire-mennuni}.

In this paper, we adopt a different approach. 
Indeed, we do not minimize the energy at fixed momentum,
to end up with dark solitons at some speed that need to be computed afterward.
In contrast, we take advantage of the equivalent form \eqref{eq:zeroJ} of \eqref{eq:TWc},
which uses implicitly parts of the results of Lemma \ref{lemma:regularity}, and we solve it
by a simple gradient descent method on its residue, after discretization.
This method is described in Section \ref{sec:discr} and implemented on the six
potentials of Section \ref{sec:examples} in Section \ref{sec:num}.
It has at least two advantages.
First, it allows to us fix the speed of the soliton {\it prior} to the computation.
Second, it allows us to work in a vector space, since the change of unknown from $u$ in 
\eqref{eq:TWc}
to $\eta$ in \eqref{eq:zeroJ} took away the inhomogeneous boundary condition.

\section{The discretized problem}
\label{sec:discr}

\subsection{The discretization of \eqref{eq:zeroJ}}
\label{subsec:discr}

For the discretization of $J_c(\cdot,\lambda)$ defined in \eqref{eq:defJc},
we fix some $L>0$ and we work on the bounded closed interval $[-L/2,L/2]$.
We introduce a number $N\geq 1$ of interior points,
and set $\delta x=L/(N+1)$ for $k\in\{0,\dots,N+1\}$, $x_k=k\delta x-L/2$.
We define the matrix corresponding to the centered first derivative
with homogeneous Dirichlet boundary conditions as
\begin{equation}
  \label{eq:defD}
  D=\frac{1}{2\delta x}
  \begin{pmatrix}
    0      & 1      & 0      & \hdots & 0      \\
    -1     & 0      & 1      & \ddots & \vdots \\
    0      & -1     & 0      & \ddots & 0      \\
    \vdots & \ddots & \ddots & \ddots & 1      \\
    0      & \hdots & 0      & -1     & 0      \\
  \end{pmatrix},
\end{equation}
so that  $D\in\mathcal M_N(\R)$ is skew-symmetric.
We also define the matrix corresponding to the classical second order derivative
with homogeneous Dirichlet boundary conditions as
\begin{equation}
  \label{eq:defA}
 A=\frac{1}{\delta x^2}
  \begin{pmatrix}
    -2     & 1      & 0      & \hdots & 0      \\
    1      & -2     & 1      & \ddots & \vdots \\
    0      &  1     & -2     & \ddots & 0      \\
    \vdots & \ddots & \ddots & \ddots & 1      \\
    0      & \hdots & 0      & 1      & -2     \\
  \end{pmatrix}.
\end{equation}
Of course, $A\in\mathcal M_N(\R)$ is symmetric.
For $\lambda\in\Lambda$, we define the interacting potential vector $V_\lambda\in\R^N$
with the entries $(V_\lambda)_k={\mathcal W_\lambda(x_k)}$ for $k\in\{1,\dots,N\}$,
provided $\W_\lambda$ is a smooth enough function.

Eventually, we define the discretized analogue of $J_c$ defined in \eqref{eq:defJc}
for $\bm\eta \in\R^N$ and $\lambda\in\Lambda$ as
\begin{equation}
  \label{eq:defJnum}
  J^{\delta x}_c(\bm\eta,\lambda) =  A\bm\eta-2V_\lambda\ast \bm\eta + c^2\bm\eta
  + c^2\bm \eta.\bm \eta ./ (2({\mathds 1}-\bm\eta))
  +(D\bm\eta. D\bm \eta)./(2({\mathds 1}-\bm\eta))
  + 2( V_\lambda\ast \bm\eta).\bm\eta,
\end{equation}
where $.$ and $./$ stand for componentwise multiplication and division respectively, and $\ast$
stands for the convolution of vectors.
This discrete convolution corresponds to the restriction to $\{1,\cdots,N\}$ of the usual
convolution of the two infinite sequences consisting in its two arguments completed by zeros.
This is consistent with the continuous context, since we expect the first and last components
of both arguments ($V_\lambda$ and ${\bm\eta}$) to be small, provided $L>0$
has been chosen large enough.
We define the numerical momentum of ${\bm\eta}$ a solution to 
$J_c^{\delta x}(\bm\eta,\lambda)=0_{\R^N}$,
by the discrete analogues to \eqref{eq:Peta} and \eqref{eq:Eeta}
\begin{align}
  \label{eq:Petadeltax}
  P^{\delta x}& = \frac{c}{4} \delta x \sum_{k=1}^N  \frac{\bm \eta_k^2}{1-\bm\eta_k},\\
  \label{eq:Eetadeltax}
  E^{\delta x} &= \frac{c^2}{8} \delta x \sum_{k=1}^N \frac{\bm \eta_k^2}{1-\bm \eta_k}
  + \frac{1}{8} \delta x \sum_{k=1}^N \frac{(D\bm\eta)_k^2}{1-\bm\eta_k}
  + \frac{1}{4} \delta x \sum_{k=1}^N\left(V_\lambda\ast\bm\eta\right)_k\bm\eta_k.
\end{align}
%Note that formula \eqref{eq:Petadeltax} is a discretized version of \eqref{eq:Peta} and
%\eqref{eq:Eetadeltax} is a discrete analogue to \eqref{eq:Eeta}.

\subsection{The minimization algorithm}
\label{subsec:algo}

Since we are interested in nontrivial zeros of the function $J_c$ at fixed $c\in\R$,
we adopt the following strategy:
we minimize at fixed $\lambda\in\Lambda$ and $c\in\R$ the squared euclidean norm
of the vector-valued function ${\bm\eta}\mapsto J_c^{\delta x}(\bm\eta,\lambda)$, starting
from a discretization of the known nontrivial solution \eqref{eq:defeta0} (for $\lambda=0$ and for the given $c$),
using a gradient method with adaptative step.
This requires the introduction of the function
\begin{equation}
  \label{eq:defF}
  F_\lambda^{\delta x}(\bm \eta) = {\delta x}\|J_c^{\delta x}(\bm\eta,\lambda)\|^2.
\end{equation}

Observe that $\bm \eta\mapsto J_c^{\delta x}(\bm\eta,\lambda)$
is a smooth function on some neighborhood of $0$ in $\R^N$, and so is $F_\lambda^{\delta x}$.
Moreover, its partial differential with respect to $\bm\eta$ at some point $(\bm\eta,\lambda)$
is given by
\begin{multline}
  \label{eq:defdiffJ}
  \forall \bm\omega\in\R^N,\qquad
  \partial_{{\bm\eta}} J_c^{\delta x} (\bm \eta,\lambda) (\bm\omega) =  A\bm \omega
  -2 V_\lambda\ast\bm\omega
  +c^2\bm\omega\\
  + \left[(2 c^2 \bm \eta. ({\mathds 1}-\bm\eta/2) + D\bm\eta.D\bm\eta)./(2({\mathds 1-\bm\eta}).({\mathds 1-\bm\eta}))\right].\bm\omega\\
  +\left(D\bm\eta./({\mathds 1}-\bm\eta)\right).(D\bm\omega)
  +2 (V_\lambda\star\bm\eta).\bm\omega
  +2 \bm\eta.(V_\lambda\ast\bm\omega).
\end{multline}
From this, we infer that
\begin{equation}
  \label{eq:defgradF}
  \nabla F_\lambda^{\delta x} (\bm \eta) = 2 {\delta x}\left[\partial_{\bm \eta}J_c^{\delta x}(\bm\eta,\lambda)\right]^t J_c^{\delta x}(\bm\eta,\lambda),
\end{equation}
where $\left[\partial_{\bm \eta}J_c^{\delta x}(\bm\eta,\lambda)\right]^t$ is the adjoint of the
linear mapping $\bm\omega \mapsto \partial_{\bm \eta}J_c^{\delta x}(\bm\eta,\lambda)(\bm \omega)$
from $\R^N$ to itself defined in \eqref{eq:defdiffJ}.

With the notations above, the gradient iteration starting from a given ${\bm \eta_n}\in\R^N$
reads
\begin{equation}
  \label{eq:methgrad}
  {\bm \eta}_{n+1} = \bm\eta_n - h \nabla F_\lambda^{\delta x}({\bm \eta}_n),
\end{equation}
for some $h>0$.
After this step, the number $F_\lambda^{\delta x}(\bm \eta_{n+1})$ is computed and compared to
$F_\lambda^{\delta x}(\bm \eta_n)$. If it is greater, then the step $h$ is divided by $2$
and step \eqref{eq:methgrad} is performed again with the new $h$ and the same $n$.
If not, then one changes $n$ in $n+1$ and performs \eqref{eq:methgrad} again.
The method stops if (at least) one of the three criteria below is fulfilled:
\begin{enumerate}[label=(\roman*)]
\item \label{bulletpoint1}
  $F_\lambda^{\delta x}(\bm \eta_n)$ is smaller than $\varepsilon^2$ for some given $\varepsilon>0$.
  In this case, we consider that the method has converged.
\item $h$ is below some given $h_{\rm min}>0$. In this case, we consider that the method has {\it not}
  converged.
\item $n$ has become greater than some given integer $n_{\rm max}$. In this case, we consider
  that the method has {\it not} converged.
\end{enumerate}

Of course, if the method \eqref{eq:methgrad} converges, then its limit is a critical point
of the function $F^{\delta x}_{\lambda}$.
This point may not be a zero of the function $F^{\delta x}_\lambda$.
This is a reason for the check of the size of $F^{\delta x}_\lambda$ in point \ref{bulletpoint1} above.
Moreover, the convergence of the method is only {\it local} in $\R^N$.
In order to avoid convergence to the trivial critical point of $F^{\delta x}_\lambda$
(that is $\bm\eta\equiv 0$), we initialize the method \eqref{eq:methgrad} with
a projection of the continuous soliton $\eta_c$
defined in \eqref{eq:defeta0} on the grid $x_1,\cdots,x_N$.
Of course, this soliton $\eta_c$ is {\it not} a soliton for a fixed $\mathcal W_\lambda$ in general
(in particular when $\lambda\neq 0$), but it proves itself far away enough from the trivial
soliton to obtain convergence to a nontrivial soliton in the numerical examples presented
in the next section.

Instead of the explicit gradient method \eqref{eq:methgrad} to minimize \eqref{eq:defF},
one could have chosen several other options, such as implicit gradient methods, or Gauss--Newton
methods, for example. In this paper, we focus on \eqref{eq:methgrad} because it is explicit
(in contrast to the other aforementioned methods,
it does {\it not} require to solve a linear or nonlinear system at each step) and it proves
itself stable enough in this context.
Indeed, one would expect to have a CFL-type condition on $h/\delta x ^3$ to ensure stability
of \eqref{eq:methgrad}.
However, for the values of $\delta x$ used in the numerical
experiments of Section \ref{sec:num}, this condition appears less restrictive than the condition
on $h$ implied by the condition on the sign of the difference of the values of $F^{\delta x}_\lambda$
between two successive iterations
({\it i.e.} $F^{\delta x}_\lambda({\bm \eta}_{n+1})-F^{\delta x}_\lambda({\bm \eta}_{n})<0$).

\section{Numerical results}
\label{sec:num}

In this section, we consider the six potentials described in Section~\ref{sec:examples}
and compute numerically nontrivial minimizers $\bm\eta \in\R^N$ of the function $F_\lambda^{\delta x}$
defined in \eqref{eq:defF} for several values of $\lambda$
and $c\in(0,\sqrt{2})$, by using the algorithm described in Section~\ref{subsec:algo}.
In the sequel, the number of interior points $N$ is odd and $M=(N+1)/2$ is therefore
an integer corresponding to the index $k$ of the middle of the vector $\bm\eta$ (it
satisfies $x_M=0$ with the notations of Section~\ref{subsec:discr}).
Using \eqref{def:u}, we recover 
from $\bm\eta$ an approximate solution $\bm u$
of the solution $u$ to \eqref{eq:TWc} by setting
$$\bm\theta_k=
\left\{
  \begin{array}{ll}
    - \displaystyle \frac{c}{2} \delta x \sum_{p=k}^{M-1} \frac12\left(\frac{\bm\eta_{p}}{1-\bm\eta_{p}}+\frac{\bm\eta_{p+1}}{1-\bm\eta_{p+1}}\right) & \text{if } k\leq M-1,\\
    0 & \text{if } k=M,\\
    \displaystyle \frac{c}{2} \delta x \sum_{p=M+1}^k \frac12\left(\frac{\bm\eta_{p-1}}{1-\bm\eta_{p-1}}+\frac{\bm\eta_p}{1-\bm\eta_{p}}\right) & \text{if } k\geq M+1,
  \end{array}
\right.
$$
and
$$\bm{u}_k=\sqrt{1-\bm\eta_k} {\rm e}^{i\bm\theta_k},$$
for $k\in\{1,\cdots,N\}$.

Recall that we initialize the method with $\eta_c$, which corresponds to the solution  to \eqref{eq:TWc} for $\W=\delta_0$ (see \eqref{eq:defeta0}).
One of the difficulties is that, for values of $c$ close to $\sqrt{2}$,
the size of the numerical support of the soliton $\eta$ tends to infinity.

%\subsection{Example 1. }
\subsection{Solitons with potential \eqref{eq:potSalvador}}
We consider the potential $\mathcal W_\lambda$ defined in \eqref{eq:potSalvador}
with $\beta=1.0$ and $\lambda=0.4$, so that $\beta/(\beta-2\lambda)=5.0$.
In particular, according to \eqref{eq:defsigma}, we have $c_L(\lambda)=\sqrt2$, 
$\wh \W''(0)>-1$, and the aspect of $\omega$  is the same as the one in the left panel
of Figure~\ref{dispersion:E1}.
The numerical experiments are carried out with $L=50$, $N=999$, so that $\delta x = 0.05$,
and $\varepsilon^2=\delta x/4$.
The parameter $h_{\rm min}$ is so small and the parameter $n_{\rm max}$ is so big that
the method has always converged in these numerical experiments.
The results displayed in Figure \ref{fig:etadec2} allow us to see the shape
of the numerical minimizers $\bm \eta$ 
as functions of $x$, and the phases $\bm\theta$, for several values of $c$.
These results confirm that the support of the minimizer $\bm \eta$ tends to spread as the speed $c$
increases. Moreover, the maximum of ${\bm \eta}$ (corresponding to the minimum of $\bm u$)
tends to decrease as $c$ increases. This is similar to the case of the Dirac potential $\lambda=0$
in this nonlocal case ($\lambda=0.4$).
The results displayed in Figure \ref{fig:Edec2} show the evolution of the momentum $P^{\delta x}$
and the energy $E^{\delta x}$ (see \eqref{eq:Petadeltax} and \eqref{eq:Eetadeltax}) of the numerical
minimizers as functions of the speed $c$, together with the evolution of $E^{\delta x}$ as
a function of $P^{\delta x}$.

Concerning the stability criteria and conjectures explained in Section~\ref{sec:stability},
we see that everything is fulfilled numerically:
the mapping $c\mapsto  P^{\delta x}$ is strictly decreasing,
the curve $ P^{\delta x}\mapsto E^{\delta x}$ is  below and tangent to the line $\sqrt{2} P^{\delta x}$,
and is strictly concave on $(0,\pi/2)$.
Therefore, these facts confirm numerically the stability of the solitons for $c\in (0,\sqrt 2)$.

%\begin{figure}[!ht]
%	\centering
%	\begin{tabular}{cc}
%		\resizebox{0.5\textwidth}{!}{
%			\includegraphics{Figures/pdf/eta-cas1.pdf}}%\input{Figures/tikz-tex/eta-cas1.tex}}
%	&
%	\resizebox{0.5\textwidth}{!}{\includegraphics{Figures/pdf/absu-cas1.pdf}}%\input{Figures/tikz-tex/absu-cas1.tex}}
%\end{tabular}
%
%\medskip
%\caption{Numerically computed solitons for potential \eqref{eq:potSalvador}
%with $\beta=1.0$ and $\lambda=0.2$.
%On the left : $\bm\eta_k$ as a function of $x_k$. On the right : $\bm{u}_k$ as
%a function of $x_k$. Numerical parameters are indicated in the text.}
%\label{fig:etadec1}
%\end{figure}
%\begin{figure}[!ht]
%\centering
%\begin{tabular}{ccc}
%\resizebox{0.33\textwidth}{!}{\input{Figures/tikz-tex/Pdec-cas1.tex}} &
%\resizebox{0.33\textwidth}{!}{\input{Figures/tikz-tex/Edec-cas1.tex}} & \resizebox{0.33\textwidth}{!}{\input{Figures/tikz-tex/EdeP-cas1.tex}}
%\end{tabular}
%\caption{Numerically computed solitons for potential \eqref{eq:potSalvador}
%with $\beta=1.0$ and $\lambda=0.2$.
%On the left : $P^{\delta x}$ as a function of $c$. In the middle : $E^{\delta x}$ as a function
%of $c$. On the right : $E^{\delta x}$ as a function of $P^{\delta x}$.
%Numerical parameters are indicated in the text.}
%\label{fig:Edec1}
%\end{figure}
\begin{figure}[!ht]
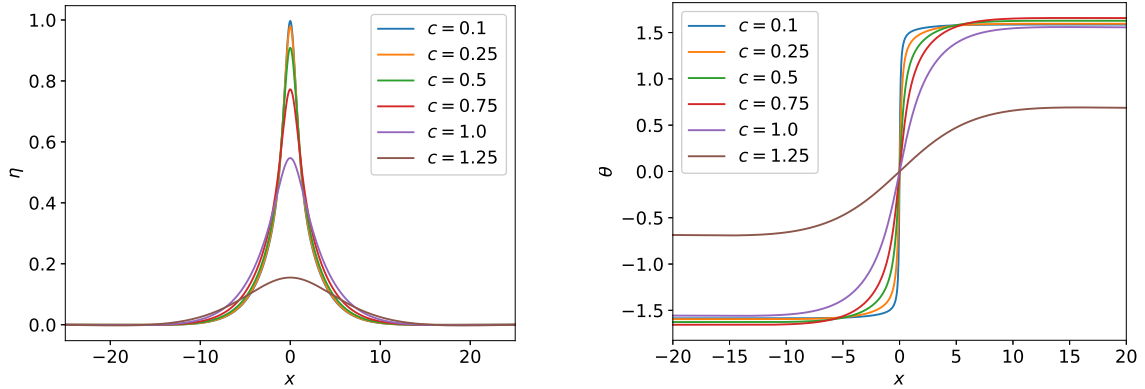

	\centering
	\begin{tabular}{cc}
		\resizebox{0.5\textwidth}{!}{
			\includegraphics{Figures/pdf/eta-cas2-eps-converted-to}}%    
	&
	\resizebox{0.5\textwidth}{!}{\includegraphics{Figures/pdf/thetadex-cas2-eps-converted-to}}%\input{Figures/tikz-tex/absu-cas2.tex}}
\end{tabular}
\caption{Numerically computed solitons for potential \eqref{eq:potSalvador} with $\beta=1.0$ and 
$\lambda=0.4$ showing 
$\bm\eta_k$ (left panel) and ${\bm \theta}_k$ (right panel) as functions of $x_k$.
%,
%for several values of $c$.
Numerical parameters are indicated in the text.
}
\label{fig:etadec2}
\end{figure}

\begin{figure}[!ht]
\centering
\begin{tabular}{ccc}
\resizebox{0.33\textwidth}{!}{% This file was created by tikzplotlib v0.9.8.
\begin{tikzpicture}

\definecolor{color0}{rgb}{0.12156862745098,0.466666666666667,0.705882352941177}

\begin{axis}[
tick align=outside,
tick pos=left,
x grid style={white!69.0196078431373!black},
xlabel={\(\displaystyle c\)},
xmin=0, xmax=1.4142135623731,
xtick style={color=black},
y grid style={white!69.0196078431373!black},
ylabel={\(\displaystyle P^{\delta x}\)},
ymin=0, ymax=1.5,
ytick style={color=black}
]
\addplot [semithick, color0, mark=asterisk, mark size=3, mark options={solid}]
table {%
0.1 1.49010747290778
0.25 1.35421920899306
0.5 1.13442803742674
0.75 0.895799764542702
1 0.57913275076115
1.25 0.0756808096623169
};
\end{axis}

\end{tikzpicture}} &
\resizebox{0.33\textwidth}{!}{% This file was created by tikzplotlib v0.9.8.
\begin{tikzpicture}

\definecolor{color0}{rgb}{0.12156862745098,0.466666666666667,0.705882352941177}

\begin{axis}[
tick align=outside,
tick pos=left,
x grid style={white!69.0196078431373!black},
xlabel={\(\displaystyle c\)},
xmin=0, xmax=1.4,
xtick style={color=black},
y grid style={white!69.0196078431373!black},
ylabel={\(\displaystyle E^{\delta x}\)},
ymin=0, ymax=1.5,
ytick style={color=black}
]
\addplot [semithick, color0, mark=asterisk, mark size=3, mark options={solid}]
table {%
0.1 1.25293176758973
0.25 1.22895595818063
0.5 1.14208126745016
0.75 0.984816795061364
1 0.696442238303692
1.25 0.104529779338614
};
\end{axis}

\end{tikzpicture}} & \resizebox{0.33\textwidth}{!}{% This file was created by tikzplotlib v0.9.8.
\begin{tikzpicture}

\begin{axis}[
tick align=outside,
tick pos=left,
x grid style={white!69.0196078431373!black},
xlabel={\(\displaystyle P^{\delta x}\)},
xmin=0, xmax=1.55,
xtick style={color=black},
y grid style={white!69.0196078431373!black},
ylabel={\(\displaystyle E^{\delta x}\)},
ymin=0, ymax=1.55,
ytick style={color=black}
]
\addplot [draw=blue, fill=blue, mark=+, only marks, scatter]
table{%
x  y
1.49010747290778 1.25293176758973
1.35421920899306 1.22895595818063
1.13442803742674 1.14208126745016
0.895799764542702 0.984816795061364
0.57913275076115 0.696442238303692
0.0756808096623169 0.104529779338614
};
\addplot [semithick, green!50.1960784313725!black, dotted]
table {%
0 0
1.49010747290778 2.10733019757969
};
\draw (axis cs:1.42010747290778,1.28293176758973) node[
  scale=0.5,
  anchor=base west,
  text=black,
  rotate=0.0
]{$c=0.1$};
\draw (axis cs:1.28421920899306,1.25895595818063) node[
  scale=0.5,
  anchor=base west,
  text=black,
  rotate=0.0
]{$c=0.25$};
\draw (axis cs:1.06442803742674,1.17208126745016) node[
  scale=0.5,
  anchor=base west,
  text=black,
  rotate=0.0
]{$c=0.5$};
\draw (axis cs:0.825799764542702,1.01481679506136) node[
  scale=0.5,
  anchor=base west,
  text=black,
  rotate=0.0
]{$c=0.75$};
\draw (axis cs:0.50913275076115,0.726442238303692) node[
  scale=0.5,
  anchor=base west,
  text=black,
  rotate=0.0
]{$c=1.0$};
\draw (axis cs:0.00568080966231685,0.134529779338614) node[
  scale=0.5,
  anchor=base west,
  text=black,
  rotate=0.0
]{$c=1.25$};
\end{axis}

\end{tikzpicture}}
\end{tabular}
\caption{
Energy and momentum for numerically computed solitons for potential \eqref{eq:potSalvador}
with $\beta=1.0$ and $\lambda=0.4$.
The first two panels show  $P^{\delta x}$  and $E^{\delta x}$ as functions of $c$,
while the last one depicts  $E^{\delta x}$ as a function of $P^{\delta x}$.
On the last panel, the dashed line corresponds to $P^{\delta x}\mapsto P^{\delta x}\sqrt{2}$.
%Numerical parameters are indicated in the text.
}
\label{fig:Edec2}
\end{figure}
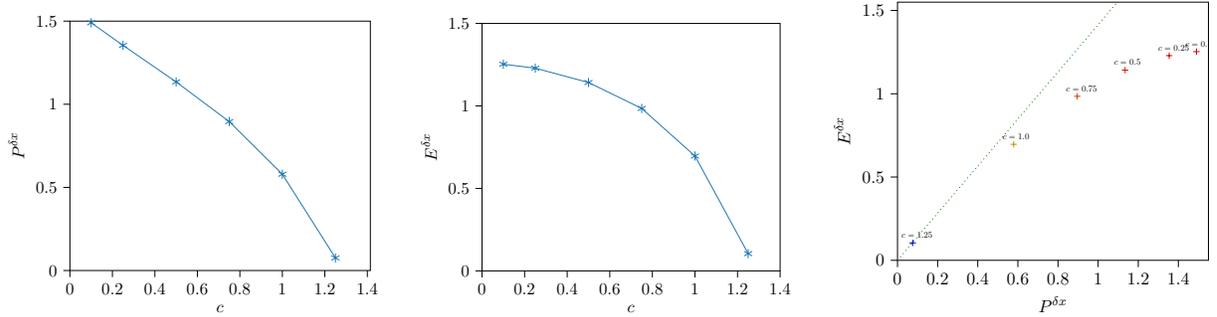

Next, we consider the same potential \eqref{eq:potSalvador} with $\beta=0.15$ and $\lambda=0.05$,
so that $\beta/(\beta-2\lambda)=3.0$ and $c_L=\sqrt2$. 
The numerical experiments are carried out with $L=400$
and $N=6399$, so that $\delta x=6.25{\rm e}-2$ and $\varepsilon^2=0.01$.
The parameter $h_{\rm min}$ is so small and the parameter $n_{\rm max}$ is so big that
the method has always converged in these numerical experiments.
The numerical results are displayed in Figure \ref{fig:PotSalvadorNew2}.
Note that, even though  $c_L=\sqrt 2$ in this case, the numerical
method converged to $0$ when starting from the soliton with speed $c=1.25$, which is {\it not}
displayed in the numerical results. For the other speeds, the numerical method converged to nonzero solitons, which are displayed in Figure \ref{fig:PotSalvadorNew2}.
The energy $P^{\delta x}$ as a function of the momentum $P^{\delta x}$ is displayed
on the left panel of Figure \ref{fig:severalEdeP}.
The dispersion curve 
is depicted in the left panel in Figure~\ref{dispersion:E1}, where there is no roton minimum and the  computed solitons are strictly decreasing on $\R^+$, as well as the curves of
momentum $P^{\delta x}$ and energy $E^{\delta x}$.
\begin{figure}[!ht]
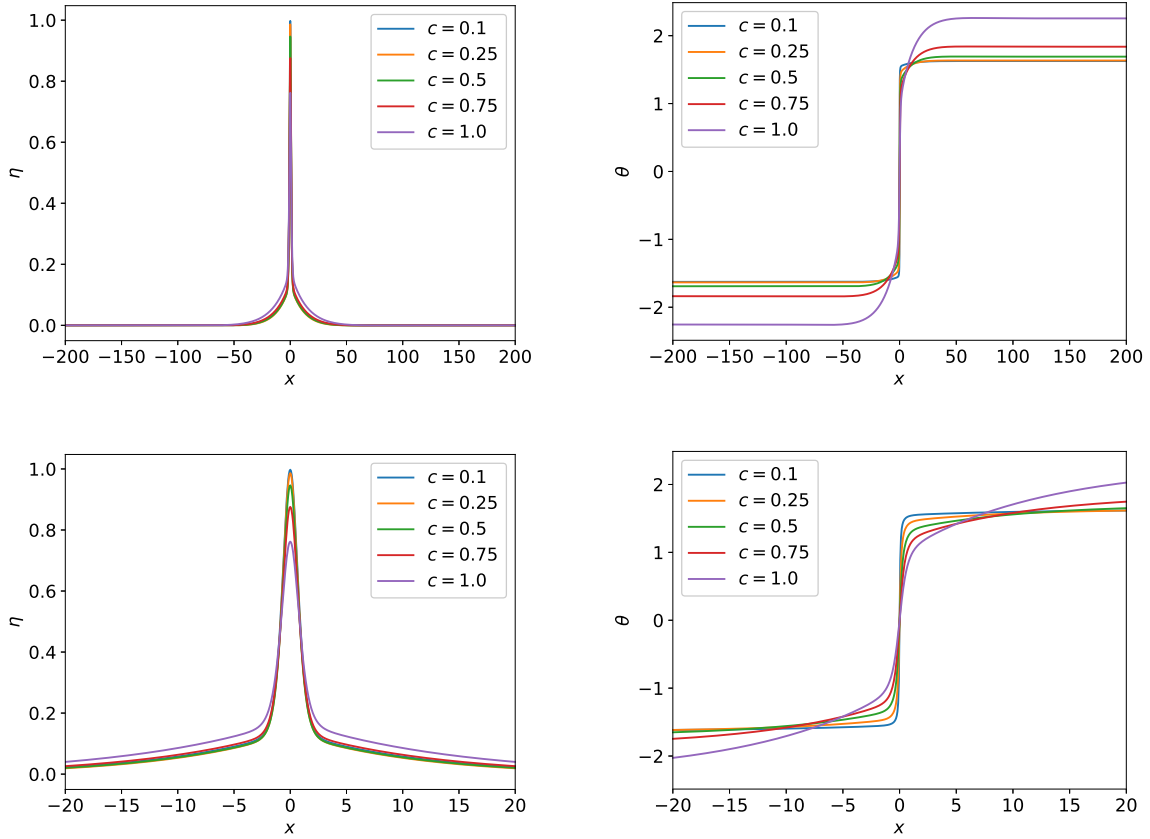

	\centering
	\begin{tabular}{cc}
		\resizebox{0.5\textwidth}{!}{
			\includegraphics{Figures/pdf/PotSalvadorNew2-eta-eps-converted-to}}
	&
   \resizebox{0.5\textwidth}{!}{\includegraphics{Figures/pdf/PotSalvadorNew2-thetadex-eps-converted-to}}\\
          \resizebox{0.5\textwidth}{!}{
			\includegraphics{Figures/pdf/PotSalvadorNew2-eta-zoom-eps-converted-to}}
	&
   \resizebox{0.5\textwidth}{!}{\includegraphics{Figures/pdf/PotSalvadorNew2-thetadex-zoom-eps-converted-to}}
\end{tabular}
\caption{Numerically computed solitons for potential \eqref{eq:potSalvador}
with $\beta=0.15$ and $\lambda=0.05$, 
showing  $\bm\eta_k$ (left panel) and ${\bm\theta}_k$ (right panel) as function of $x_k$.
First line: from $-L/2$ to $L/2$. Second line: from $-20$ to $+20$.}
\label{fig:PotSalvadorNew2}
\end{figure}

\begin{figure}[!ht]
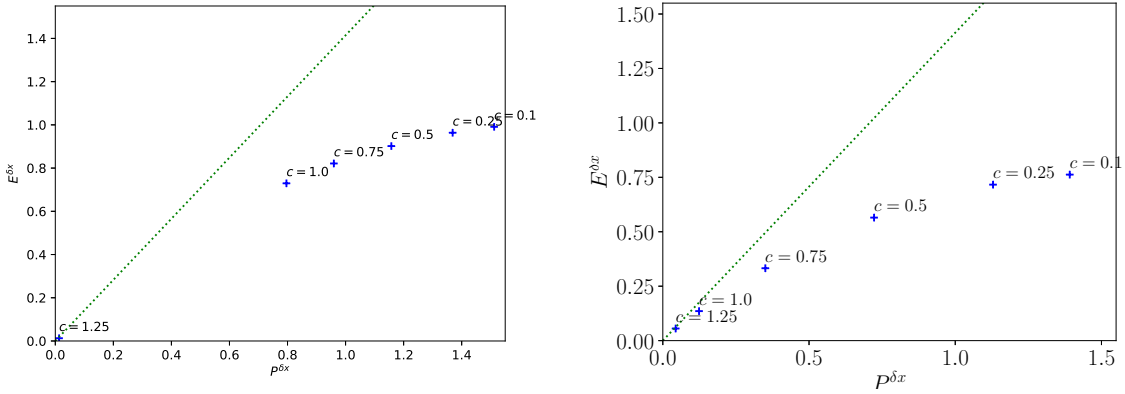

	\centering
	\begin{tabular}{cc}
		\resizebox{0.5\textwidth}{!}{
				\includegraphics{Figures/pdf/EdeP-Ex1_1b-eps-converted-to}}
	&
   \resizebox{0.5\textwidth}{!}{\includegraphics{Figures/pdf/EdeP-Ex1_2-eps-converted-to}}%effac'e 
   %le -eps-converted-to
   %       \resizebox{0.5\textwidth}{!}{
%			\includegraphics{Figures/pdf/PotSalvadorNew2-eta-zoom-eps-converted-to}}
%	&
 %  
 %\resizebox{0.5\textwidth}{!}{\includegraphics{Figures/pdf/PotSalvadorNew2-thetadex-zoom-eps-converted-to}}
\end{tabular}
\caption{$E^{\delta x}$ as a function of $P^{\delta x}$ for several speeds.
  Left:  $\beta=0.15$ and $\lambda=0.05$.
  Right: $\beta=0.5$ and $\lambda=-1.0$.
  On both panels, the dashed line corresponds to $P^{\delta x}\mapsto P^{\delta x}\sqrt{2}$.}
\label{fig:severalEdeP}
\end{figure}

We   examine the potential \eqref{eq:potSalvador} with $\beta=0.5$ and $\lambda=-1.0$, so that $\beta/(\beta-2\lambda)=0.2$, whose  dispersion curve is depicted in the right panel in Figure~\ref{dispersion:E1}. Notice that $\omega_\lambda$ has a roton minimum and that  $c_L(\lambda) \sim 1.19$.
The numerical experiments are carried out with $L=60$
and $N=2399$, so that $\delta x=2.5{\rm e}-2$ and $\varepsilon^2=6.25{\rm e}-3$.
The parameter $h_{\rm min}$ is so small and the parameter $n_{\rm max}$ is so big that
the method has always converged in these numerical experiments.
The numerical results are displayed in Figure \ref{fig:PotSalvadorNew3}.
Note that, despite the fact that the Landau speed is $c_L \# 1.19$, the numerical method converged to a nonzero soliton for $c=1.25$, which is displayed in the numerical results.
Observe that, in contrast to the two previous experiments for potential \eqref{eq:potSalvador},
the minimizers ${\bm \eta}$  are {\em not} strictly decreasing on $\R^+$except for $c=1.25$, and take  negative values, which means that $\abs{\bm u} =\sqrt{1-{\bm \eta}}$ has values above $1.0$.
This leads thinking that the existence of a roton minimum of the dispersion curve is related to 
the   oscillation of solitons. 
Remarkably, the only minimizer without oscillations corresponds to $c=1.25$, which is above the Landau speed.

The energy $E^{\delta x}$ as a function of the momentum $P^{\delta x}$ is displayed in the right panel of Figure \ref{fig:severalEdeP} 
for $(\beta,\lambda)=(0.15,0.05)$ and $(\beta,\lambda)=(0.5,-1.0)$. We see that the existence of a roton minimum does not change the aspect of the curve $(E^{\delta x},P^{\delta x})$, that seems to be strictly concave, so that the computed solitons should be stable.

\begin{figure}[!ht]
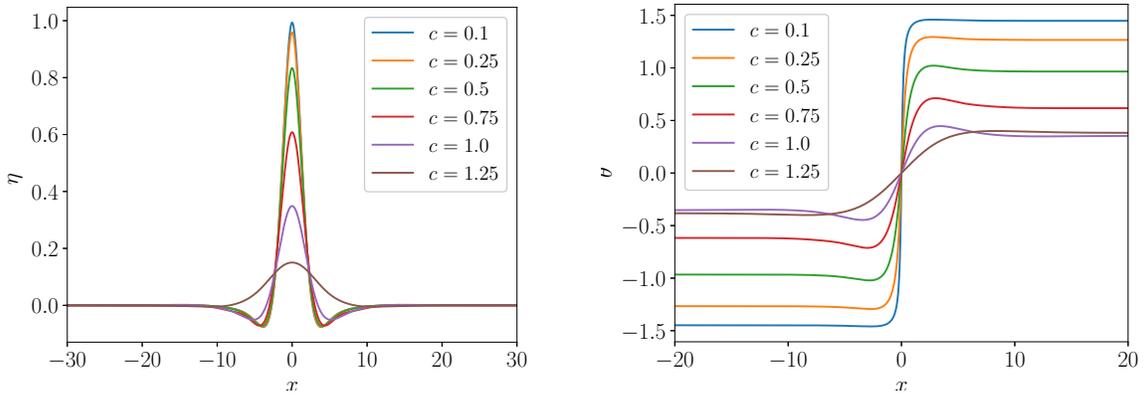

	\centering
	\begin{tabular}{cc}
		\resizebox{0.5\textwidth}{!}{
			\includegraphics{Figures/pdf/PotSalvadorNew3-eta-eps-converted-to}}
	&
   \resizebox{0.5\textwidth}{!}{\includegraphics{Figures/pdf/PotSalvadorNew3-thetadex-eps-converted-to}}
\end{tabular}
\caption{Numerically computed solitons for potential \eqref{eq:potSalvador}
with $\beta=0.5$ and $\lambda=-1.0$,
showing  $\bm\eta_k$ (left panel) and ${\bm\theta}_k$ (right panel) as function of $x_k$.
%Numerical parameters are indicated in the text.
}
\label{fig:PotSalvadorNew3}
\end{figure}

We end this subsection by studying the effect of the parameter $\lambda$, for the same potential \eqref{eq:potSalvador}, on the minimum value of 
$\abs{\bm u}$, and the $L^2$ and $H^1$-norms of the solitons.
For this purpose, we fix $\beta=0.5$ and $c=0.1$, and we let $\lambda$ vary in the interval $[-10, 0.2]$. For the numerical computations, we set $L=60$ and $N=1201$, so that $\delta x \# 4.99{\rm e}-2$
and $\varepsilon^2 \# 1.25{\rm e}-2$.
Numerical solitons ${\bm \eta}$ and their phases ${\bm \theta}$
are displayed in Figure~\ref{fig:PotSalvadorNew-lambdabouge}.
The behavior of the minimum of ${\bm u}$
the soliton width, and the $L^2$ and $H^1$
norms of ${\bm \eta}$ as functions of $\lambda$ are displayed in Figure~\ref{fig:PotSalvadorNew-lambdabouge-delambda}.
Let us recall that the critical value for the presence of a roton minimum in $\omega$
is $\lambda^*=-{\beta^3}/({2(2-\beta^2)})$, that is $\lambda^*\#-0.0357$ when $\beta=0.5$.
In Figure~\ref{fig:PotSalvadorNew-lambdabouge}, we see that the solitons exhibit a negative bump 
for $\lambda\leq \lambda^*$, and that they are strictly  decreasing for $\lambda> \lambda^*$, 
supporting the link between a roton minimum and the oscillations.
In addition, we remark a monotonic behavior on the minimum value of 
$\abs{\bm u}$, on the width of $\bm \eta$,  and on the $L^2$ and $H^1$-norms of the solitons, 
independently of the presence of a roton minimum.
%  Moreover, from it seems that the norm
%of $\eta_\lambda$ converges to some value, as $\lambda$ decreases. This agrees with the fact that 
%$\hat W_\lambda\to 1/(1+4x^2) $

\begin{figure}[!ht]
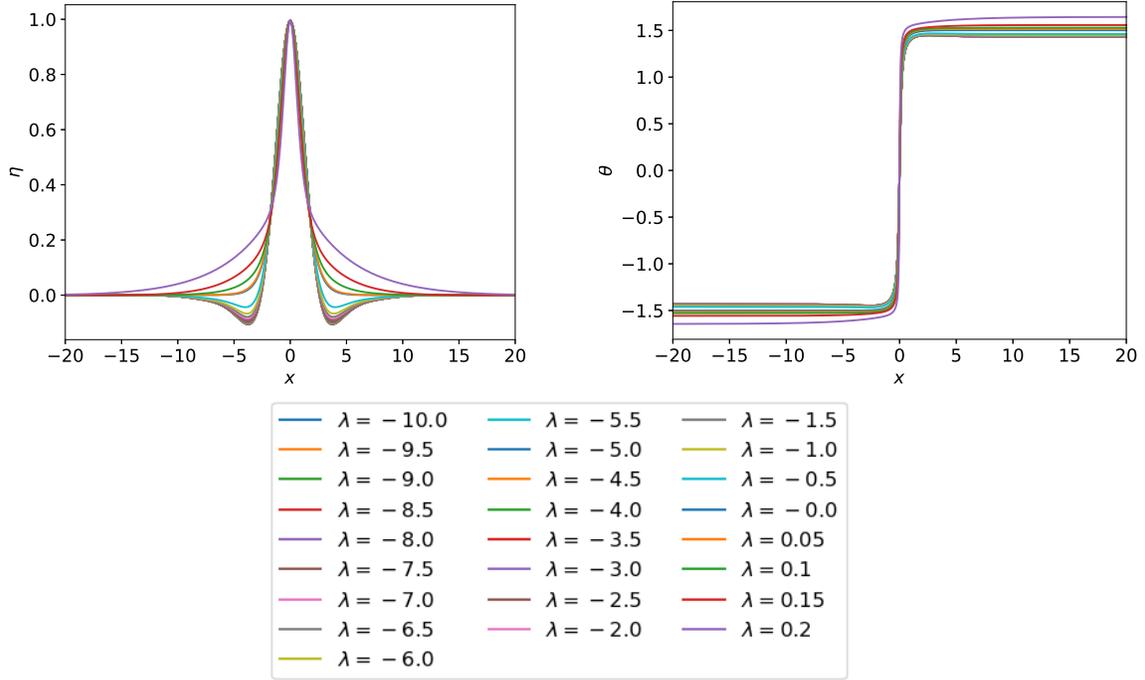

	\centering
	\begin{tabular}{cc}
		\resizebox{0.5\textwidth}{!}{
			\includegraphics{Figures/pdf/PotSalvadorNew-lambdabouge-eta-eps-converted-to}}
	&
   \resizebox{0.5\textwidth}{!}{\includegraphics{Figures/pdf/PotSalvadorNew-lambdabouge-theta-eps-converted-to}}
        \end{tabular}
        \begin{tabular}{c}
          \resizebox{0.50\textwidth}{!}{\includegraphics{Figures/pdf/PotSalvadorNew-lambdabouge-legende-eps-converted-to}}
        \end{tabular}
\caption{Numerically computed solitons for potential \eqref{eq:potSalvador}
  with $\beta=0.5$ and $c=0.1$ for several values of $\lambda$.
  Top panel:
   $\bm\eta_k$ (left) and ${\bm\theta}_k$ (right) as function of $x_k$.
%  
%  . On the left: $\bm\eta_k$ as a function of $x_k$. On the right: ${\bm \theta}_k$ as
%  a function of $x_k$. Numerical parameters are indicated in the text.
  Bottom panel is the legend of the top panel.}
\label{fig:PotSalvadorNew-lambdabouge}
\end{figure}

\begin{figure}[!ht]
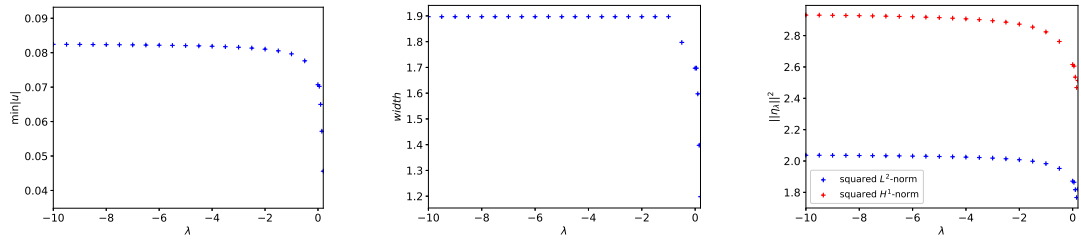

	\centering
	\begin{tabular}{ccc}
		\resizebox{0.3\textwidth}{!}{
			\includegraphics{Figures/pdf/PotSalvadorNew-lambdabouge-mindelambda-eps-converted-to}}
	&
   \resizebox{0.3\textwidth}{!}{\includegraphics{Figures/pdf/PotSalvadorNew-lambdabouge-largeurdelambda-eps-converted-to}}
   &
   \resizebox{0.3\textwidth}{!}{\includegraphics{Figures/pdf/PotSalvadorNew-lambdabouge-normesdelambda-eps-converted-to}}
\end{tabular}
\caption{Minimum of ${\bm u}$ (left panel), width of the soliton (middle panel),
  and $L^2$ and $H^1$ norms (right panel) as a function of $\lambda$
  for potential \eqref{eq:potSalvador} with $\beta=0.5$ and $c=0.1$.
  Numerical parameters are indicated in the text.}
\label{fig:PotSalvadorNew-lambdabouge-delambda}
\end{figure}

%%%%%%%%%%%%%%%%%%%%%%%%%%%%%%%%%%%%%%%%%%%%%%%%%%%%%%%%
% \clearpage 
\subsection{Solitons  with potential \eqref{eq:potGaussien}}
\label{sebsec:potGaussien}
We consider now the Gaussian potential $\mathcal W_\lambda$
defined in \eqref{eq:potGaussien} in Example~2.
We take first $\lambda=1.0$ and compute numerically nontrivial minimizers of the function $F_\lambda^{\delta x}$
defined in \eqref{eq:defF} $\lambda=1.0$ and several values of $c$
using the algorithm described in Subsection \ref{subsec:algo}.
The numerical experiments are carried out with $L=50$, $N=999$, so that $\delta x = 0.05$,
and $\varepsilon^2=\delta x/4$. In this case, $\delta \xi=2\pi/L\# 0.13$.
The results are displayed in Figures~\ref{fig:etadec3} and \ref{fig:Edec3}.
As explained in Example~2, we expect solutions for every  $c\in(0,c_L(1))$,
where $c_L(1)=\# 1.30$, in agreement with Figures~\ref{fig:etadec3} and \ref{fig:Edec3}.  Moreover, 
the presence of a roton minimum (see middle panel in Figure~\ref{dispersion:E2}) should imply the 
presence of oscillations on $\R^+$, which is indeed seen in these figures. Notice that for 
$c=1.25$, $\bm \eta_k$ shows no oscillation on $\R^+$, but the there is a bump in its phase $\bm 
\theta_k$, so there is an oscillation of $\bm u_k$. 
\begin{figure}[!ht]
	\centering
	\begin{tabular}{cc}
		\resizebox{0.5\textwidth}{!}{
			\includegraphics{Figures/pdf/eta-Ex2-cas1-eps-converted-to}}%    
	&                                                                         
	\resizebox{0.5\textwidth}{!}{\includegraphics{Figures/pdf/thetadex-Ex2-cas1-eps-converted-to}}%\input{Figures/tikz-tex/absu-Ex2-cas1.tex}}
\end{tabular}
\caption{Numerically computed solitons for potential \eqref{eq:potGaussien}
with $\lambda=1.0$,
showing  $\bm\eta_k$ (left panel) and ${\bm\theta}_k$ (right panel) as function of $x_k$.
%On the left : $\bm\eta_k$ as a function of $x_k$. On the right : ${\bm \theta}_k$ as
%a function of $x_k$. Numerical parameters are indicated in the text.
}
\label{fig:etadec3}
\end{figure}

\begin{figure}[!ht]
\centering
\begin{tabular}{ccc}
\resizebox{0.33\textwidth}{!}{% This file was created by tikzplotlib v0.9.8.
\begin{tikzpicture}

\definecolor{color0}{rgb}{0.12156862745098,0.466666666666667,0.705882352941177}

\begin{axis}[
tick align=outside,
tick pos=left,
x grid style={white!69.0196078431373!black},
xlabel={\(\displaystyle c\)},
xmin=0, xmax=1.4142135623731,
xtick style={color=black},
y grid style={white!69.0196078431373!black},
ylabel={\(\displaystyle P^{\delta x}\)},
ymin=0, ymax=1.5,
ytick style={color=black}
]
\addplot [semithick, color0, mark=asterisk, mark size=3, mark options={solid}]
table {%
0.1 1.40588786085359
0.25 1.1594214823102
0.5 0.769104584644954
0.75 0.429044091098425
1 0.175170133218195
1.25 0.0732963127209293
};
\end{axis}

\end{tikzpicture}} &                                                                           \resizebox{0.33\textwidth}{!}{% This file was created by tikzplotlib v0.9.8.
\begin{tikzpicture}

\definecolor{color0}{rgb}{0.12156862745098,0.466666666666667,0.705882352941177}

\begin{axis}[
tick align=outside,
tick pos=left,
x grid style={white!69.0196078431373!black},
xlabel={\(\displaystyle c\)},
xmin=0, xmax=1.4,
xtick style={color=black},
y grid style={white!69.0196078431373!black},
ylabel={\(\displaystyle E^{\delta x}\)},
ymin=0, ymax=1.5,
ytick style={color=black}
]
\addplot [semithick, color0, mark=asterisk, mark size=3, mark options={solid}]
table {%
0.1 0.805454409068738
0.25 0.76344942676453
0.5 0.620828259218095
0.75 0.413213838720355
1 0.197944501960267
1.25 0.0927897576154562
};
\end{axis}

\end{tikzpicture}} & \resizebox{0.33\textwidth}{!}{% This file was created by tikzplotlib v0.9.8.
\begin{tikzpicture}

\begin{axis}[
tick align=outside,
tick pos=left,
x grid style={white!69.0196078431373!black},
xlabel={\(\displaystyle P^{\delta x}\)},
xmin=0, xmax=1.55,
xtick style={color=black},
y grid style={white!69.0196078431373!black},
ylabel={\(\displaystyle E^{\delta x}\)},
ymin=0, ymax=1.55,
ytick style={color=black}
]
\addplot [draw=blue, fill=blue, mark=+, only marks, scatter]
table{%
x  y
1.40588786085359 0.805454409068738
1.1594214823102 0.76344942676453
0.769104584644954 0.620828259218095
0.429044091098425 0.413213838720355
0.175170133218195 0.197944501960267
0.0732963127209293 0.0927897576154562
};
\addplot [semithick, green!50.1960784313725!black, dotted]
table {%
0 0
1.40588786085359 1.98822567999485
};
\draw (axis cs:1.33588786085359,0.835454409068738) node[
  scale=0.5,
  anchor=base west,
  text=black,
  rotate=0.0
]{$c=0.1$};
\draw (axis cs:1.0894214823102,0.79344942676453) node[
  scale=0.5,
  anchor=base west,
  text=black,
  rotate=0.0
]{$c=0.25$};
\draw (axis cs:0.699104584644954,0.650828259218095) node[
  scale=0.5,
  anchor=base west,
  text=black,
  rotate=0.0
]{$c=0.5$};
\draw (axis cs:0.359044091098425,0.443213838720355) node[
  scale=0.5,
  anchor=base west,
  text=black,
  rotate=0.0
]{$c=0.75$};
\draw (axis cs:0.105170133218195,0.227944501960267) node[
  scale=0.5,
  anchor=base west,
  text=black,
  rotate=0.0
]{$c=1.0$};
\draw (axis cs:0.00329631272092928,0.122789757615456) node[
  scale=0.5,
  anchor=base west,
  text=black,
  rotate=0.0
]{$c=1.25$};
\end{axis}

\end{tikzpicture}}
\end{tabular}
\caption{Numerically computed solitons for potential \eqref{eq:potGaussien}
with $\lambda=1.0$.
On the left: $P^{\delta x}$ as a function of $c$. In the middle: $E^{\delta x}$ as a function
of $c$. On the right: $E^{\delta x}$ as a function of $P^{\delta x}$.
}
%Numerical parameters are indicated in the text.}
\label{fig:Edec3}
\end{figure}
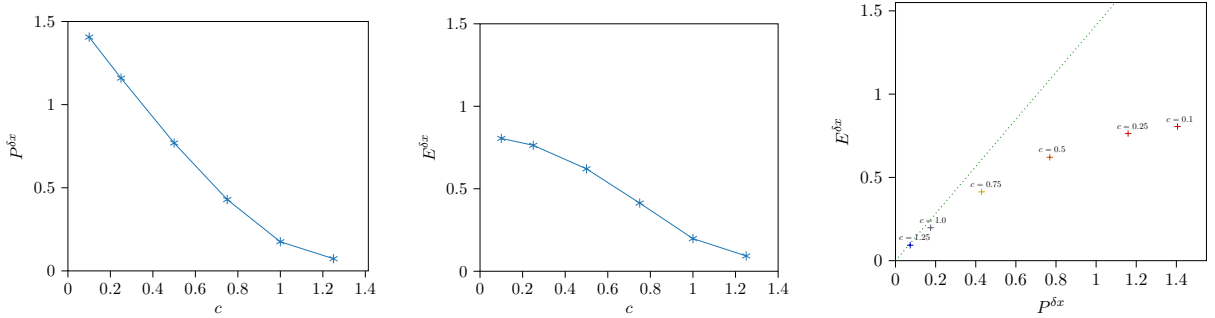

Next, we run another experiment with a larger $\lambda$ to make the potential more long range. We chose $\lambda=3$, and run another series of computation of numerical minimizers for several values of $c$ as before.
As seen in Example 2, there is a roton minimum and the Landau speed is 
$c_L(3)\# 0.66$.
To take into account the larger supports of the minimizers, we take $L=100$ and $N=1999$ so that
$\delta x=0.05$ and $\varepsilon^2=1.25{\rm e}-2$.
The results are displayed in Figures~\ref{fig:PotGaussienNew2-etaettheta}
and \ref{fig:PotGaussienNew2-energies}. We have thus obtained soliton for speeds above the Landau 
speed, which seem to be stable, due to the aspect of the  curves in 
Figure~\ref{fig:PotGaussienNew2-energies}.
This is an unexpected result, that shows that the physical conjecture of the role of the Landau 
speed is not valid in this case. However, the fact that there is bigger  gap between $c_L$  and 
$\sqrt2 $ (than for $\lambda=1$), could be an indication
of the more  oscillating behavior of the solitons in Figure~\ref{fig:PotGaussienNew2-etaettheta}.

% {\color{blue}2023jan27:In order to check what happens for $c=0.75,\ c=1.0,\ c=1.25$, I continued
% the tests with a tolerance of ${\rm e}-3$.}

\begin{figure}[!ht]
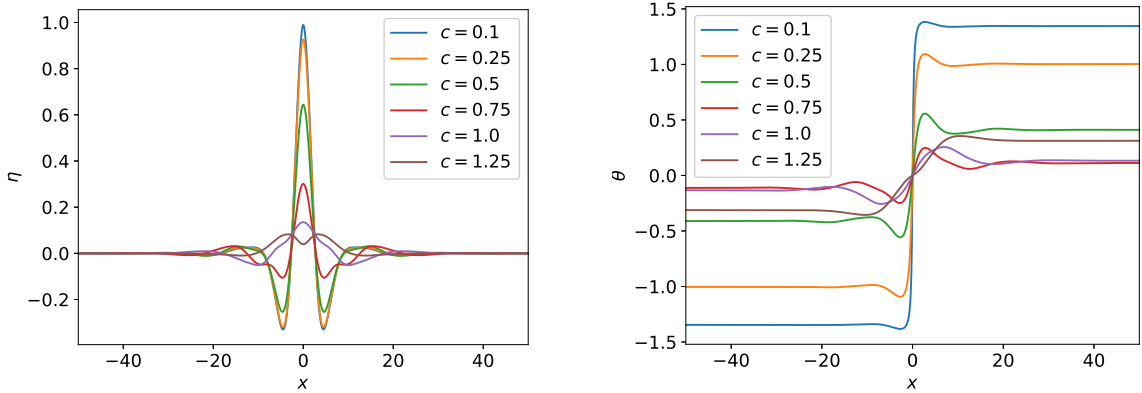

	\centering
	\begin{tabular}{cc}
		\resizebox{0.5\textwidth}{!}{
			\includegraphics{Figures/pdf/PotGaussienNew2-eta-eps-converted-to}}%    
	&                                                                            
	\resizebox{0.5\textwidth}{!}{\includegraphics{Figures/pdf/PotGaussienNew2-thetadex-eps-converted-to}}%\input{Figures/tikz-tex/absu-Ex2-cas1.tex}}
\end{tabular}
\caption{Numerically computed solitons for potential \eqref{eq:potGaussien}
with $\lambda=3.0$, showing  $\bm\eta_k$ (left panel) and ${\bm\theta}_k$ (right panel) as function of $x_k$.
%On the left : $\bm\eta_k$ as a function of $x_k$. On the right : ${\bm \theta}_k$ as
%a function of $x_k$. Numerical parameters are indicated in the text.
}
\label{fig:PotGaussienNew2-etaettheta}
\end{figure}

\begin{figure}[!ht]
\centering
\begin{tabular}{ccc}
\resizebox{0.33\textwidth}{!}{% This file was created by tikzplotlib v0.9.8.
\begin{tikzpicture}

\definecolor{color0}{rgb}{0.12156862745098,0.466666666666667,0.705882352941177}

\begin{axis}[
tick align=outside,
tick pos=left,
x grid style={white!69.0196078431373!black},
xlabel={\(\displaystyle c\)},
xmin=0, xmax=1.4142135623731,
xtick style={color=black},
y grid style={white!69.0196078431373!black},
ylabel={\(\displaystyle P^{\delta x}\)},
ymin=0, ymax=1.7,
ytick style={color=black}
]
\addplot [semithick, color0, mark=asterisk, mark size=3, mark options={solid}]
table {%
0.1 1.31457263054845
0.25 0.933801378363381
0.5 0.331075869709029
0.75 0.0737322008989271
1.25 0.0232522787582929
};
\end{axis}

\end{tikzpicture}} &                                                                           \resizebox{0.33\textwidth}{!}{% This file was created by tikzplotlib v0.9.8.
\begin{tikzpicture}

\definecolor{color0}{rgb}{0.12156862745098,0.466666666666667,0.705882352941177}

\begin{axis}[
tick align=outside,
tick pos=left,
x grid style={white!69.0196078431373!black},
xlabel={\(\displaystyle c\)},
xmin=0, xmax=1.4,
xtick style={color=black},
y grid style={white!69.0196078431373!black},
ylabel={\(\displaystyle E^{\delta x}\)},
ymin=0, ymax=1.5,
ytick style={color=black}
]
\addplot [semithick, color0, mark=asterisk, mark size=3, mark options={solid}]
table {%
0.1 0.516781039134106
0.25 0.447554402852548
0.5 0.210426258739309
0.75 0.0563032508788538
1.25 0.0286067202682061
};
\end{axis}

\end{tikzpicture}} & \resizebox{0.33\textwidth}{!}{% This file was created by tikzplotlib v0.9.8.
\begin{tikzpicture}

\begin{axis}[
tick align=outside,
tick pos=left,
x grid style={white!69.0196078431373!black},
xlabel={\(\displaystyle P^{\delta x}\)},
xmin=0, xmax=1.55,
xtick style={color=black},
y grid style={white!69.0196078431373!black},
ylabel={\(\displaystyle E^{\delta x}\)},
ymin=0, ymax=1.55,
ytick style={color=black}
]
\addplot [draw=blue, fill=blue, mark=+, only marks, scatter]
table{%
x  y
1.31457263054845 0.516781039134106
0.933801378363381 0.447554402852548
0.331075869709029 0.210426258739309
0.0737322008989271 0.0563032508788538
0.0232522787582929 0.0286067202682061
};
\addplot [semithick, green!50.1960784313725!black, dotted]
table {%
0 0
1.31457263054845 1.85908644284609
};
\draw (axis cs:1.31457263054845,0.546781039134106) node[
  scale=0.5,
  anchor=base west,
  text=black,
  rotate=0.0
]{$c=0.1$};
\draw (axis cs:0.933801378363381,0.477554402852548) node[
  scale=0.5,
  anchor=base west,
  text=black,
  rotate=0.0
]{$c=0.25$};
\draw (axis cs:0.331075869709029,0.240426258739309) node[
  scale=0.5,
  anchor=base west,
  text=black,
  rotate=0.0
]{$c=0.5$};
\draw (axis cs:0.0737322008989271,0.0863032508788538) node[
  scale=0.5,
  anchor=base west,
  text=black,
  rotate=0.0
]{$c=0.75$};
\draw (axis cs:0.0232522787582929,0.0586067202682061) node[
  scale=0.5,
  anchor=base west,
  text=black,
  rotate=0.0
]{$c=1.25$};
\end{axis}

\end{tikzpicture}}
\end{tabular}
\caption{Numerically computed solitons for potential \eqref{eq:potGaussien}
with $\lambda=3.0$, showing
 $P^{\delta x}$ (left) and  $E^{\delta x}$ (center) as a function
of $c$. On the right, $E^{\delta x}$ as a function of $P^{\delta x}$.
}
\label{fig:PotGaussienNew2-energies}
\end{figure}
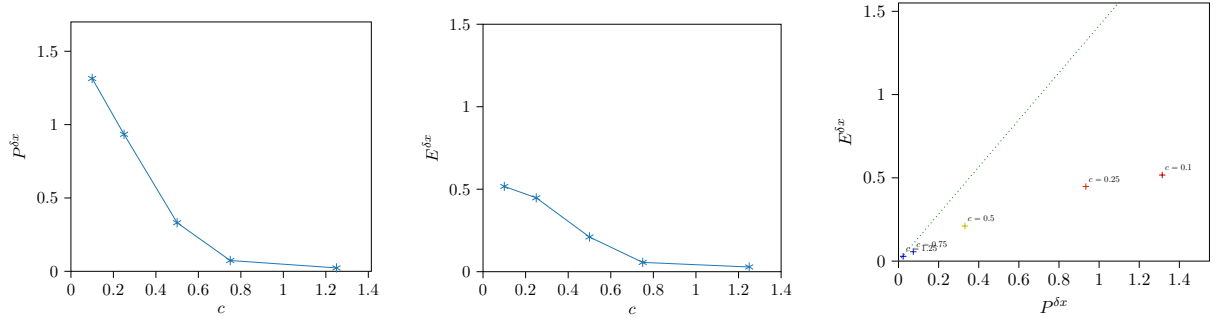

%%%%%%%%%%%%%%%%%%%%%%%%%%%%%%%%%%%%%%%%%%%%%%%%%%%%%%%%%%
% \clearpage 
\subsection{Solitons with potential \eqref{eq:potunif}}
\label{subsec:E3}
We study now  the rectangular potential $\mathcal W_\lambda$ defined in \eqref{eq:potunif} in Example~3. 
We start by computing numerically nontrivial minimizers of the function $F_\lambda^{\delta x}$ in \eqref{eq:defF} for $\lambda=2.0$ and several values of $c$ as described in Subsection \ref{subsec:algo}.
The numerical experiments are carried out with $L=50$, $N=999$, so that $\delta x = 0.05$, and $\varepsilon^2=\delta x/4$.
In this case, $\delta \xi=2\pi/L\sim 0.13$.
The results are displayed in Figures \ref{fig:etadec5} and \ref{fig:Edec5}.
As seen in Figure~\ref{dispersion:E3}, there is a roton minimum 
and the Landau speed  $c_L \# 1.374$ is very close to the speed of sound.
This agrees of the small bump on $\eta_k$, except for the soliton  with speed $c=1.25$, whose 
profile $\bm\eta_k$ and phase $\bm \eta_k$
 seem to be strictly monotone on $\R^+$. As in all previous examples, 
 the monotonicity of  $c\mapsto P^{\delta}$ indicates that these are stable solitons.

We perform another experiment for $\lambda=4.5$, with $L=80$, $N=1599$ and a tolerance of $\varepsilon^1={\delta x}/4=1.25{\rm e}-2$.
The results are displayed in Figures \ref{fig:PotUniflambda4.5-etatheta}
and \ref{fig:PotUnifLambda4.5-NRJ}.

In this case, $c_L \# .624$ and the dispersion curve is depicted in  Figure~\ref{dispersion:E3}. We 
found a change on the behavior of soliton similar to one detected in 
Subsection~\ref{sebsec:potGaussien}, i.e.\ a more oscillating behavior of the solitons.
 Concerning the stability, the curve  $P^{\delta x}$ is strictly decreasing for $c\leq 0
 75$, but it is not clear the exact behavior of the curve for $c>0.75$.

%%% Figures exemple 3
%% lambda = 1.8

%%% Figures exemple 3
%% lambda =2.0

\begin{figure}[!ht]
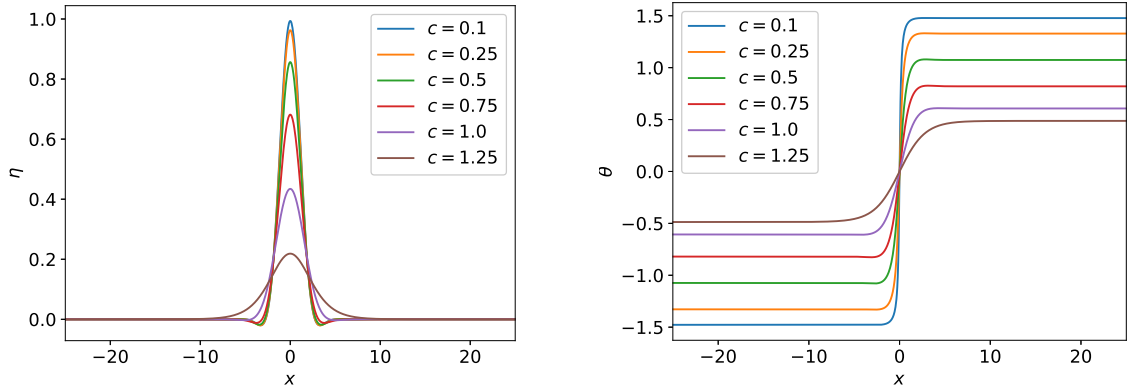

\centering
\begin{tabular}{cc}
\resizebox{0.5\textwidth}{!}{
\includegraphics{Figures/pdf/eta-Ex3-cas2-eps-converted-to}}%    
&
\resizebox{0.5\textwidth}{!}{\includegraphics{Figures/pdf/thetadex-Ex3-cas2-eps-converted-to}}%\input{Figures/tikz-tex/absu-Ex3-cas2.tex}}
\end{tabular}
\caption{Numerically computed solitons for potential \eqref{eq:potunif}
with $\lambda=2.0$, showing 
 $\bm\eta_k$ (left panel) and  ${\bm\theta}_k$ (right panel)
 as a function of $x_k$.}
\label{fig:etadec5}
\end{figure}

\begin{figure}[!ht]
\centering
\begin{tabular}{ccc}
\resizebox{0.33\textwidth}{!}{% This file was created by tikzplotlib v0.9.8.
\begin{tikzpicture}

\definecolor{color0}{rgb}{0.12156862745098,0.466666666666667,0.705882352941177}

\begin{axis}[
tick align=outside,
tick pos=left,
x grid style={white!69.0196078431373!black},
xlabel={\(\displaystyle c\)},
xmin=0, xmax=1.4142135623731,
xtick style={color=black},
y grid style={white!69.0196078431373!black},
ylabel={\(\displaystyle P^{\delta x}\)},
ymin=0, ymax=1.7,
ytick style={color=black}
]
\addplot [semithick, color0, mark=asterisk, mark size=3, mark options={solid}]
table {%
0.1 1.41192270518982
0.25 1.1712033931801
0.5 0.788212949383917
0.75 0.452775887848796
1 0.201671327405296
1.25 0.0732963127209293
};
\end{axis}

\end{tikzpicture}} &
\resizebox{0.33\textwidth}{!}{% This file was created by tikzplotlib v0.9.8.
\begin{tikzpicture}

\definecolor{color0}{rgb}{0.12156862745098,0.466666666666667,0.705882352941177}

\begin{axis}[
tick align=outside,
tick pos=left,
x grid style={white!69.0196078431373!black},
xlabel={\(\displaystyle c\)},
xmin=0, xmax=1.4,
xtick style={color=black},
y grid style={white!69.0196078431373!black},
ylabel={\(\displaystyle E^{\delta x}\)},
ymin=0, ymax=1.5,
ytick style={color=black}
]
\addplot [semithick, color0, mark=asterisk, mark size=3, mark options={solid}]
table {%
0.1 0.825810352921703
0.25 0.784897720147318
0.5 0.645475857429304
0.75 0.441319103199709
1 0.22962453180464
1.25 0.0933034899039156
};
\end{axis}

\end{tikzpicture}} & \resizebox{0.33\textwidth}{!}{% This file was created by tikzplotlib v0.9.8.
\begin{tikzpicture}

\begin{axis}[
tick align=outside,
tick pos=left,
x grid style={white!69.0196078431373!black},
xlabel={\(\displaystyle P^{\delta x}\)},
xmin=0, xmax=1.55,
xtick style={color=black},
y grid style={white!69.0196078431373!black},
ylabel={\(\displaystyle E^{\delta x}\)},
ymin=0, ymax=1.55,
ytick style={color=black}
]
\addplot [draw=blue, fill=blue, mark=+, only marks, scatter]
table{%
x  y
1.41192270518982 0.825810352921703
1.1712033931801 0.784897720147318
0.788212949383917 0.645475857429304
0.452775887848796 0.441319103199709
0.201671327405296 0.22962453180464
0.0732963127209293 0.0933034899039156
};
\addplot [semithick, green!50.1960784313725!black, dotted]
table {%
0 0
1.41192270518982 1.99676023870195
};
\draw (axis cs:1.41192270518982,0.855810352921703) node[
  scale=0.5,
  anchor=base west,
  text=black,
  rotate=0.0
]{$c=0.1$};
\draw (axis cs:1.1712033931801,0.814897720147318) node[
  scale=0.5,
  anchor=base west,
  text=black,
  rotate=0.0
]{$c=0.25$};
\draw (axis cs:0.788212949383917,0.675475857429304) node[
  scale=0.5,
  anchor=base west,
  text=black,
  rotate=0.0
]{$c=0.5$};
\draw (axis cs:0.452775887848796,0.471319103199709) node[
  scale=0.5,
  anchor=base west,
  text=black,
  rotate=0.0
]{$c=0.75$};
\draw (axis cs:0.201671327405296,0.25962453180464) node[
  scale=0.5,
  anchor=base west,
  text=black,
  rotate=0.0
]{$c=1.0$};
\draw (axis cs:0.0732963127209293,0.123303489903916) node[
  scale=0.5,
  anchor=base west,
  text=black,
  rotate=0.0
]{$c=1.25$};
\end{axis}

\end{tikzpicture}}
\end{tabular}
\caption{Numerically computed solitons for potential \eqref{eq:potunif}
with $\lambda=2.0$,
showing  $P^{\delta x}$ (left) and $E^{\delta x}$ (center) as a function
of $c$. On the right, $E^{\delta x}$ as a function of $P^{\delta x}$.
%Numerical parameters are indicated in the text.
}
\label{fig:Edec5}
\end{figure}

%%% Figures exemple 3
%% lambda =4.5

\begin{figure}[!ht]
\centering
\begin{tabular}{cc}
\resizebox{0.5\textwidth}{!}{
\includegraphics{Figures/pdf/PotUnifNew-lambda4.5-eta-eps-converted-to}}%    
&
\resizebox{0.5\textwidth}{!}{\includegraphics{Figures/pdf/PotUnifNew-lambda4.5-thetadex-eps-converted-to}}
\end{tabular}
\caption{Numerically computed solitons for potential \eqref{eq:potunif}
with $\lambda=4.5$, showing 
 $\bm\eta_k$ (left) and  ${\bm\theta}_k$ (right) as
a function of $x_k$. 
%Numerical parameters are indicated in the text.
}
\label{fig:PotUniflambda4.5-etatheta}
\end{figure}

\begin{figure}[!ht]
\centering
\begin{tabular}{ccc}
\resizebox{0.33\textwidth}{!}{% This file was created by tikzplotlib v0.9.8.
\begin{tikzpicture}

\definecolor{color0}{rgb}{0.12156862745098,0.466666666666667,0.705882352941177}

\begin{axis}[
tick align=outside,
tick pos=left,
x grid style={white!69.0196078431373!black},
xlabel={\(\displaystyle c\)},
xmin=0, xmax=1.4142135623731,
xtick style={color=black},
y grid style={white!69.0196078431373!black},
ylabel={\(\displaystyle P^{\delta x}\)},
ymin=0, ymax=1.7,
ytick style={color=black}
]
\addplot [semithick, color0, mark=asterisk, mark size=3, mark options={solid}]
table {%
0.1 1.33696404391742
0.25 0.993489048314586
0.5 0.455418614490227
0.75 0.0660329349060465
1 0.0455363453283852
1.25 0.0379953756083166
};
\end{axis}

\end{tikzpicture}} &
\resizebox{0.33\textwidth}{!}{% This file was created by tikzplotlib v0.9.8.
\begin{tikzpicture}

\definecolor{color0}{rgb}{0.12156862745098,0.466666666666667,0.705882352941177}

\begin{axis}[
tick align=outside,
tick pos=left,
x grid style={white!69.0196078431373!black},
xlabel={\(\displaystyle c\)},
xmin=0, xmax=1.4,
xtick style={color=black},
y grid style={white!69.0196078431373!black},
ylabel={\(\displaystyle E^{\delta x}\)},
ymin=0, ymax=1.5,
ytick style={color=black}
]
\addplot [semithick, color0, mark=asterisk, mark size=3, mark options={solid}]
table {%
0.1 0.552144855200508
0.25 0.490926108406892
0.5 0.285589172773583
0.75 0.0537423010420959
1 0.0501188245117822
1.25 0.0476246124318716
};
\end{axis}

\end{tikzpicture}} & \resizebox{0.33\textwidth}{!}{% This file was created by tikzplotlib v0.9.8.
\begin{tikzpicture}

\begin{axis}[
tick align=outside,
tick pos=left,
x grid style={white!69.0196078431373!black},
xlabel={\(\displaystyle P^{\delta x}\)},
xmin=0, xmax=1.55,
xtick style={color=black},
y grid style={white!69.0196078431373!black},
ylabel={\(\displaystyle E^{\delta x}\)},
ymin=0, ymax=1.55,
ytick style={color=black}
]
\addplot [draw=blue, fill=blue, mark=+, only marks, scatter]
table{%
x  y
1.33696404391742 0.552144855200508
0.993489048314586 0.490926108406892
0.455418614490227 0.285589172773583
0.0660329349060465 0.0537423010420959
0.0455363453283852 0.0501188245117822
0.0379953756083166 0.0476246124318716
};
\addplot [semithick, green!50.1960784313725!black, dotted]
table {%
0 0
1.33696404391742 1.89075268331319
};
\draw (axis cs:1.33696404391742,0.582144855200508) node[
  scale=0.5,
  anchor=base west,
  text=black,
  rotate=0.0
]{$c=0.1$};
\draw (axis cs:0.993489048314586,0.520926108406892) node[
  scale=0.5,
  anchor=base west,
  text=black,
  rotate=0.0
]{$c=0.25$};
\draw (axis cs:0.455418614490227,0.315589172773583) node[
  scale=0.5,
  anchor=base west,
  text=black,
  rotate=0.0
]{$c=0.5$};
\draw (axis cs:0.0660329349060465,0.0837423010420959) node[
  scale=0.5,
  anchor=base west,
  text=black,
  rotate=0.0
]{$c=0.75$};
\draw (axis cs:0.0455363453283852,0.0801188245117822) node[
  scale=0.5,
  anchor=base west,
  text=black,
  rotate=0.0
  ]{
    % $c=1.0$
  };
\draw (axis cs:0.0379953756083166,0.0776246124318716) node[
  scale=0.5,
  anchor=base west,
  text=black,
  rotate=0.0
  ]{
    %$c=1.25$
  };
\end{axis}

\end{tikzpicture}}
\end{tabular}
\caption{Numerically computed solitons for potential \eqref{eq:potunif}
with $\lambda=4.5$, showing 
$P^{\delta x}$ (left) and 
$E^{\delta x}$ (center) as a function
of $c$. On the right,  $E^{\delta x}$ as a function of $P^{\delta x}$.
%Numerical parameters are indicated in the text.
}
\label{fig:PotUnifLambda4.5-NRJ}
\end{figure}

%%%%%%%%%%%%%%%%%%%%%%%%%%%%%%%%%%%%%%%%

 \subsection{Solitons with potential \eqref{eq:pot3Dirac}}
In this subsection, we consider the  potential $\mathcal W_\lambda$ in \eqref{eq:pot3Dirac} in Example 4, given by three Dirac delta functions,
so that the Landau speed is $c_L(\lambda)=\sqrt 2$, for any $\lambda\geq 0$, 
and there is no roton minimum. We will investigate numerical soliton for $\lambda=2$ and $\lambda=10$, whose dispersion curve are given in Figure~\ref{dispersion:E4}.

We first compute numerically nontrivial minimizers of  $F_\lambda^{\delta x}$
in \eqref{eq:defF} for $\lambda=2.0$ using
the algorithm described in Subsection \ref{subsec:algo}.
The numerical experiments are carried out with $L=50$, $N=999$, so that $\delta x = 0.05$, and $\varepsilon^2=\delta x/4$.
In this case, $\delta \xi=2\pi/L\sim 0.13$.
The results are displayed in Figures~\ref{fig:etadec8} and \ref{fig:Edec8}.
As expected, the aspect of the dispersion curve in the left panel of Figure~\ref{dispersion:E4} 
agrees with the monotonicity of the numerical solitons  in  Figure~\ref{fig:etadec8}, which should 
be stable in view of the results in Figure~\ref{fig:Edec8}.
\begin{figure}[!ht]
	\centering
	\begin{tabular}{cc}
		\resizebox{0.5\textwidth}{!}{
			\includegraphics{Figures/pdf/Pot3Dirac-lambda=2.0-eta-eps-converted-to}}%   
	&
	\resizebox{0.5\textwidth}{!}{\includegraphics{Figures/pdf/Pot3Dirac-lambda=2.0-thetadex-eps-converted-to}}%\input{Figures/tikz-tex/absu-Ex4-cas2.tex}}
\end{tabular}
\caption{Numerically computed solitons for potential \eqref{eq:pot3Dirac}
with $\lambda=2.0$, showing 
$\bm\eta_k$ (left) and $\bm\theta_k$ (right)
as a function of $x_k$.}
\label{fig:etadec8}
\end{figure}
\begin{figure}[!ht]
	\centering
	\begin{tabular}{ccc}
		\resizebox{0.33\textwidth}{!}{% This file was created by tikzplotlib v0.9.8.
\begin{tikzpicture}

\definecolor{color0}{rgb}{0.12156862745098,0.466666666666667,0.705882352941177}

\begin{axis}[
tick align=outside,
tick pos=left,
x grid style={white!69.0196078431373!black},
xlabel={\(\displaystyle c\)},
xmin=0, xmax=1.4142135623731,
xtick style={color=black},
y grid style={white!69.0196078431373!black},
ylabel={\(\displaystyle P^{\delta x}\)},
ymin=0, ymax=1.7,
ytick style={color=black}
]
\addplot [semithick, color0, mark=asterisk, mark size=3, mark options={solid}]
table {%
0.1 1.46978294073712
0.25 1.30715695016549
0.5 1.04398863825895
0.75 0.774826269018571
1 0.462772657589714
1.25 0.0770393882024286
};
\end{axis}

\end{tikzpicture}} &                                                                            \resizebox{0.33\textwidth}{!}{% This file was created by tikzplotlib v0.9.8.
\begin{tikzpicture}

\definecolor{color0}{rgb}{0.12156862745098,0.466666666666667,0.705882352941177}

\begin{axis}[
tick align=outside,
tick pos=left,
x grid style={white!69.0196078431373!black},
xlabel={\(\displaystyle c\)},
xmin=0, xmax=1.4,
xtick style={color=black},
y grid style={white!69.0196078431373!black},
ylabel={\(\displaystyle E^{\delta x}\)},
ymin=0, ymax=1.5,
ytick style={color=black}
]
\addplot [semithick, color0, mark=asterisk, mark size=3, mark options={solid}]
table {%
0.1 1.1439131454511
0.25 1.11504087070845
0.5 1.01227527737312
0.75 0.837108745229425
1 0.555025669715542
1.25 0.106161765461107
};
\end{axis}

\end{tikzpicture}} & \resizebox{0.33\textwidth}{!}{% This file was created by tikzplotlib v0.9.8.
\begin{tikzpicture}

\begin{axis}[
tick align=outside,
tick pos=left,
x grid style={white!69.0196078431373!black},
xlabel={\(\displaystyle P^{\delta x}\)},
xmin=0, xmax=1.55,
xtick style={color=black},
y grid style={white!69.0196078431373!black},
ylabel={\(\displaystyle E^{\delta x}\)},
ymin=0, ymax=1.55,
ytick style={color=black}
]
\addplot [draw=blue, fill=blue, mark=+, only marks, scatter]
table{%
x  y
1.46978294073712 1.1439131454511
1.30715695016549 1.11504087070845
1.04398863825895 1.01227527737312
0.774826269018571 0.837108745229425
0.462772657589714 0.555025669715542
0.0770393882024286 0.106161765461107
};
\addplot [semithick, green!50.1960784313725!black, dotted]
table {%
0 0
1.46978294073712 2.07858696853505
};
\draw (axis cs:1.46978294073712,1.1739131454511) node[
  scale=0.6,
  anchor=base west,
  text=black,
  rotate=0.0
]{$c=0.1$};
\draw (axis cs:1.30715695016549,1.14504087070845) node[
  scale=0.6,
  anchor=base west,
  text=black,
  rotate=0.0
]{$c=0.25$};
\draw (axis cs:1.04398863825895,1.04227527737312) node[
  scale=0.6,
  anchor=base west,
  text=black,
  rotate=0.0
]{$c=0.5$};
\draw (axis cs:0.774826269018571,0.867108745229425) node[
  scale=0.6,
  anchor=base west,
  text=black,
  rotate=0.0
]{$c=0.75$};
\draw (axis cs:0.462772657589714,0.585025669715542) node[
  scale=0.6,
  anchor=base west,
  text=black,
  rotate=0.0
]{$c=1.0$};
\draw (axis cs:0.0770393882024286,0.136161765461107) node[
  scale=0.6,
  anchor=base west,
  text=black,
  rotate=0.0
]{$c=1.25$};
\end{axis}

\end{tikzpicture}}
	\end{tabular}
	\caption{Numerically computed solitons for potential \eqref{eq:pot3Dirac}
		with $\lambda=2.0$, showing
$P^{\delta x}$ (left) and $E^{\delta x}$ (center) as a function
		of $c$. On the right, $E^{\delta x}$ as a function of $P^{\delta x}$.
		%Numerical parameters are indicated in the text.
	}
	\label{fig:Edec8}
\end{figure}

We perform another experiment for $\lambda=10.0$ with $L=300$, $N=5999$, so that $\delta x = 0.05$
and $\varepsilon^2/\delta x/4$. In this case, $\delta\xi=2\pi/L\sim0.021$.
The numerical results are displayed in Figures~\ref{fig:Pot3Dirac-lambda=10.0-etatheta}
and \ref{fig:Pot3Dirac-lambda=10.0-NRJ}.
Even though there is no roton minimum and the Landau speed coincides with the speed of sound, we 
see an oscillating behavior of the solutions. 
In view of the dispersion curve in Figure~\ref{dispersion:E4}, we conjecture that the oscillation are related to the existence of inflection points of the dispersion curve. Once again, these solitons
should be stable in view of the curves in Figure~\ref{fig:Pot3Dirac-lambda=10.0-NRJ}.

Another point is that, despite the oscillations in ${\bm \eta}$ in Figure
\ref{fig:Pot3Dirac-lambda=10.0-etatheta}, the minimum values are of order {\it minus} a few $10^{-3}$
for all speed.
This may indicate that the corresponding continuous solitons $\eta$ are indeed non-negative, so that
the corresponding function $u$ has modulus bounded by $1$.
\begin{figure}[!ht]
\centering
\begin{tabular}{cc}
\resizebox{0.5\textwidth}{!}{
	\includegraphics{Figures/pdf/Pot3Dirac-lambda=10.0-eta-eps-converted-to}}%   
&
\resizebox{0.5\textwidth}{!}{\includegraphics{Figures/pdf/Pot3Dirac-lambda=10.0-thetadex-eps-converted-to}}%\input{Figures/tikz-tex/absu-Ex4-cas2.tex}}
\end{tabular}
\caption{Numerically computed solitons for potential \eqref{eq:pot3Dirac}
with $\lambda=10.0$, showing 
 $\bm\eta_k$ (left) and ${\bm \theta}_k$ (right)
as a function of $x_k$.
%Numerical parameters are indicated in the text.
}
\label{fig:Pot3Dirac-lambda=10.0-etatheta}
\end{figure}
\begin{figure}[!ht]
\centering
\begin{tabular}{ccc}
\resizebox{0.33\textwidth}{!}{% This file was created by tikzplotlib v0.9.8.
\begin{tikzpicture}

\definecolor{color0}{rgb}{0.12156862745098,0.466666666666667,0.705882352941177}

\begin{axis}[
tick align=outside,
tick pos=left,
x grid style={white!69.0196078431373!black},
xlabel={\(\displaystyle c\)},
xmin=0, xmax=1.4142135623731,
xtick style={color=black},
y grid style={white!69.0196078431373!black},
ylabel={\(\displaystyle P^{\delta x}\)},
ymin=0, ymax=1.7,
ytick style={color=black}
]
\addplot [semithick, color0, mark=asterisk, mark size=3, mark options={solid}]
table {%
0.1 1.47606777517304
0.25 1.32616463841537
0.5 1.07401376169404
0.75 0.838845538219547
1 0.593192746104716
1.25 0.0180665023790393
};
\end{axis}

\end{tikzpicture}} &                                                                            \resizebox{0.33\textwidth}{!}{% This file was created by tikzplotlib v0.9.8.
\begin{tikzpicture}

\definecolor{color0}{rgb}{0.12156862745098,0.466666666666667,0.705882352941177}

\begin{axis}[
tick align=outside,
tick pos=left,
x grid style={white!69.0196078431373!black},
xlabel={\(\displaystyle c\)},
xmin=0, xmax=1.4,
xtick style={color=black},
y grid style={white!69.0196078431373!black},
ylabel={\(\displaystyle E^{\delta x}\)},
ymin=0, ymax=1.5,
ytick style={color=black}
]
\addplot [semithick, color0, mark=asterisk, mark size=3, mark options={solid}]
table {%
0.1 1.24956301693226
0.25 1.22389762224068
0.5 1.1308544060172
0.75 0.985197870276432
1 0.769070102446899
1.25 0.0260814387616776
};
\end{axis}

\end{tikzpicture}} & \resizebox{0.33\textwidth}{!}{% This file was created by tikzplotlib v0.9.8.
\begin{tikzpicture}

\begin{axis}[
tick align=outside,
tick pos=left,
x grid style={white!69.0196078431373!black},
xlabel={\(\displaystyle P^{\delta x}\)},
xmin=0, xmax=1.55,
xtick style={color=black},
y grid style={white!69.0196078431373!black},
ylabel={\(\displaystyle E^{\delta x}\)},
ymin=0, ymax=1.55,
ytick style={color=black}
]
\addplot [draw=blue, fill=blue, mark=+, only marks, scatter]
table{%
x  y
1.47606777517304 1.24956301693226
1.32616463841537 1.22389762224068
1.07401376169404 1.1308544060172
0.838845538219547 0.985197870276432
0.593192746104716 0.769070102446899
0.0180665023790393 0.0260814387616776
};
\addplot [semithick, green!50.1960784313725!black, dotted]
table {%
0 0
1.47606777517304 2.08747506663159
};
\draw (axis cs:1.35006777517304,1.26456301693226) node[
  scale=0.7,
  anchor=base west,
  text=black,
  rotate=0.0
]{$c=0.1$};
\draw (axis cs:1.20616463841537,1.16089762224068) node[
  scale=0.7,
  anchor=base west,
  text=black,
  rotate=0.0
]{$c=0.25$};
\draw (axis cs:1.00401376169404,1.1608544060172) node[
  scale=0.7,
  anchor=base west,
  text=black,
  rotate=0.0
]{$c=0.5$};
\draw (axis cs:0.838845538219547,1.01519787027643) node[
  scale=0.7,
  anchor=base west,
  text=black,
  rotate=0.0
]{$c=0.75$};
\draw (axis cs:0.593192746104716,0.799070102446899) node[
  scale=0.7,
  anchor=base west,
  text=black,
  rotate=0.0
]{$c=1.0$};
\draw (axis cs:0.0180665023790393,0.0560814387616776) node[
  scale=0.7,
  anchor=base west,
  text=black,
  rotate=0.0
]{$c=1.25$};
\end{axis}

\end{tikzpicture}}
\end{tabular}
\caption{Numerically computed solitons for potential \eqref{eq:pot3Dirac}
with $\lambda=10.0$, showing 
 $P^{\delta x}$ (left) and $E^{\delta x}$ (right) as a function
of $c$. On the right, $E^{\delta x}$ as a function of $P^{\delta x}$.
%Numerical parameters are indicated in the text.
}
\label{fig:Pot3Dirac-lambda=10.0-NRJ}
\end{figure}

%%%%%%%%%%%%%%%%%%%%%%%%%%%%%%%%%%%%%%%%%%%%%%%%%%%%%%%%%%%%%%%%%%%%%%%%%%%%%%%%%%%%%%%%%%%
% \clearpage 
 \subsection{Solitons with potential \eqref{eq:potBR}}
In this subsection, we consider the Bochner-Riesz potential $\mathcal W_\lambda$ defined in \eqref{eq:potBR}, for $\lambda=2$ and $\lambda=4$,
as explained in Example~5.

We compute first nontrivial minimizers of the function $F_\lambda^{\delta x}$
for $\lambda=1.0$, as before, taking $L=200$ and $N=3999$, so that $\delta x = 0.05$, and $\varepsilon^2=\delta x/4$. In this case, $\delta \xi=2\pi/L\sim 0.13$. The results are displayed in Figures~\ref{fig:PotBR-lambda=1.0-etatheta} and \ref{fig:PotBR-lambda=1.0-NRJ}.
Clearly, $\lambda=1$ is a critical case for the dispersion curve, since it is given by  a straight line in interval $[0,\sqrt 2]$, as seen in Figure~\ref{dispersion:E5}. Even though $\omega_\lambda$ has no inflection point, the non-differentiability at $\xi=\sqrt 2$ could be related to the oscillations of solitons in Figure~\ref{fig:PotBR-lambda=1.0-etatheta}. 
These solitons should be stable in view of  Figure~\ref{fig:PotBR-lambda=1.0-NRJ}.

We run another experiment with $\lambda=4.0$, $L=300$, $N=5999$, so that $\delta x = 0.05$.
The results are displayed in Figures \ref{fig:PotBR-lambda=4.0-etatheta} and \ref{fig:PotBR-lambda=4.0-NRJ}. Even though in this case $c_L\#0.7$, 
we obtain again solitons for every subsonic speed, that seem stable
since $c\mapsto P^{\delta x}$ is decreasing.  
The more oscillating behavior of the soliton, could be explained by the gap between $c_L$ and $\sqrt 2$, as in Subsection~\ref{subsec:E3}.

% {\color{green}  Numerical results displayed in Figures \ref{fig:PotBR-lambda=4.0-etatheta} and \ref{fig:PotBR-lambda=4.0-NRJ} come from {\it unconverged} simulations.}

%The same is done for $\lambda=2.0$ in Figures \ref{fig:etadec10} and \ref{fig:Edec11},
%and for $\lambda=0.5$ in Figures \ref{fig:etadec8} and \ref{fig:Edec8}.

\begin{figure}[!ht]
	\centering
	\begin{tabular}{cc}
		\resizebox{0.5\textwidth}{!}{\includegraphics{Figures/pdf/PotBR-lambda=1.0-eta-eps-converted-to}}
		&                                                                            
		\resizebox{0.5\textwidth}{!}{\includegraphics{Figures/pdf/PotBR-lambda=1.0-thetadex-eps-converted-to}}%\input{Figures/absu-Ex5-cas1.tex}}
\end{tabular}
\caption{Numerically computed solitons for potential \eqref{eq:potBR}
	with $\lambda=1.0$, showing 
$\bm\eta_k$ (left) and ${\bm \theta}_k$ (right) as
	a function of $x_k$. 
%	Numerical parameters are indicated in the text.
}
\label{fig:PotBR-lambda=1.0-etatheta}
\end{figure}

\begin{figure}[!ht]
\centering
\begin{tabular}{ccc}
	\resizebox{0.33\textwidth}{!}{% This file was created by tikzplotlib v0.9.8.
\begin{tikzpicture}

\definecolor{color0}{rgb}{0.12156862745098,0.466666666666667,0.705882352941177}

\begin{axis}[
tick align=outside,
tick pos=left,
x grid style={white!69.0196078431373!black},
xlabel={\(\displaystyle c\)},
xmin=0, xmax=1.4142135623731,
xtick style={color=black},
y grid style={white!69.0196078431373!black},
ylabel={\(\displaystyle P^{\delta x}\)},
ymin=0, ymax=1.7,
ytick style={color=black}
]
\addplot [semithick, color0, mark=asterisk, mark size=3, mark options={solid}]
table {%
0.1 1.41627000619312
0.25 1.17798770509715
0.5 0.79821840813834
0.75 0.464556642213112
1 0.21099240214293
1.25 0.0732963127209309
};
\end{axis}

\end{tikzpicture}}
	&
	\resizebox{0.33\textwidth}{!}{% This file was created by tikzplotlib v0.9.8.
\begin{tikzpicture}

\definecolor{color0}{rgb}{0.12156862745098,0.466666666666667,0.705882352941177}

\begin{axis}[
tick align=outside,
tick pos=left,
x grid style={white!69.0196078431373!black},
xlabel={\(\displaystyle c\)},
xmin=0, xmax=1.4,
xtick style={color=black},
y grid style={white!69.0196078431373!black},
ylabel={\(\displaystyle E^{\delta x}\)},
ymin=0, ymax=1.5,
ytick style={color=black}
]
\addplot [semithick, color0, mark=asterisk, mark size=3, mark options={solid}]
table {%
0.1 0.838119622246435
0.25 0.797522333448849
0.5 0.659307936465034
0.75 0.456135183351586
1 0.241947240224075
1.25 0.0943453372797519
};
\end{axis}

\end{tikzpicture}} & \resizebox{0.33\textwidth}{!}{% This file was created by tikzplotlib v0.9.8.
\begin{tikzpicture}

\begin{axis}[
tick align=outside,
tick pos=left,
x grid style={white!69.0196078431373!black},
xlabel={\(\displaystyle P^{\delta x}\)},
xmin=0, xmax=1.55,
xtick style={color=black},
y grid style={white!69.0196078431373!black},
ylabel={\(\displaystyle E^{\delta x}\)},
ymin=0, ymax=1.55,
ytick style={color=black}
]
\addplot [draw=blue, fill=blue, mark=+, only marks, scatter]
table{%
x  y
1.41627000619312 0.838119622246435
1.17798770509715 0.797522333448849
0.79821840813834 0.659307936465034
0.464556642213112 0.456135183351586
0.21099240214293 0.241947240224075
0.0732963127209309 0.0943453372797519
};
\addplot [semithick, green!50.1960784313725!black, dotted]
table {%
0 0
1.41627000619312 2.00290825074053
};
\draw (axis cs:1.30627000619312,0.758119622246435) node[
  scale=0.7,
  anchor=base west,
  text=black,
  rotate=0.0
]{$c=0.1$};
\draw (axis cs:1.09098770509715,0.827522333448849) node[
  scale=0.7,
  anchor=base west,
  text=black,
  rotate=0.0
]{$c=0.25$};
\draw (axis cs:0.79821840813834,0.689307936465034) node[
  scale=0.7,
  anchor=base west,
  text=black,
  rotate=0.0
]{$c=0.5$};
\draw (axis cs:0.464556642213112,0.486135183351586) node[
  scale=0.7,
  anchor=base west,
  text=black,
  rotate=0.0
]{$c=0.75$};
\draw (axis cs:0.21099240214293,0.271947240224075) node[
  scale=0.7,
  anchor=base west,
  text=black,
  rotate=0.0
]{$c=1.0$};
\draw (axis cs:0.0732963127209309,0.124345337279752) node[
  scale=0.7,
  anchor=base west,
  text=black,
  rotate=0.0
]{$c=1.25$};
\end{axis}

\end{tikzpicture}}
\end{tabular}
\caption{Numerically computed solitons for potential \eqref{eq:potBR}
	with $\lambda=1.0$, showing 
$P^{\delta x}$ (left) and $E^{\delta x}$ (center) as a function
	of $c$. On the right, $E^{\delta x}$ as a function of $P^{\delta x}$.
%	Numerical parameters are indicated in the text.
}
\label{fig:PotBR-lambda=1.0-NRJ}
\end{figure}

%%% \lambda = 4.0

\begin{figure}[!ht]
	\centering
	\begin{tabular}{cc}
		\resizebox{0.5\textwidth}{!}{\includegraphics{Figures/pdf/PotBR-lambda=4.0-eta-eps-converted-to}}
		&                                                                            
		\resizebox{0.5\textwidth}{!}{\includegraphics{Figures/pdf/PotBR-lambda=4.0-thetadex-eps-converted-to}}%\input{Figures/absu-Ex5-cas1.tex}}
\end{tabular}
\caption{Numerically computed solitons for potential \eqref{eq:potBR}
	with $\lambda=4.0$, showing 
$\bm\eta_k$ (left) and  ${\bm \theta}_k$ (right) as	a function of $x_k$. 
%Numerical parameters are indicated in the text.
}
\label{fig:PotBR-lambda=4.0-etatheta}
\end{figure}

\begin{figure}[!ht]
\centering
\begin{tabular}{ccc}
	\resizebox{0.33\textwidth}{!}{% This file was created by tikzplotlib v0.9.8.
\begin{tikzpicture}

\definecolor{color0}{rgb}{0.12156862745098,0.466666666666667,0.705882352941177}

\begin{axis}[
tick align=outside,
tick pos=left,
x grid style={white!69.0196078431373!black},
xlabel={\(\displaystyle c\)},
xmin=0, xmax=1.4142135623731,
xtick style={color=black},
y grid style={white!69.0196078431373!black},
ylabel={\(\displaystyle P^{\delta x}\)},
ymin=0, ymax=1.7,
ytick style={color=black}
]
\addplot [semithick, color0, mark=asterisk, mark size=3, mark options={solid}]
table {%
0.1 1.36217262265445
0.25 1.05492578022971
0.5 0.583333852007676
0.75 0.163118389057553
1 0.0431171624952924
1.25 0.0528045429487633
};
\end{axis}

\end{tikzpicture}}
	&
	\resizebox{0.33\textwidth}{!}{% This file was created by tikzplotlib v0.9.8.
\begin{tikzpicture}

\definecolor{color0}{rgb}{0.12156862745098,0.466666666666667,0.705882352941177}

\begin{axis}[
tick align=outside,
tick pos=left,
x grid style={white!69.0196078431373!black},
xlabel={\(\displaystyle c\)},
xmin=0, xmax=1.4,
xtick style={color=black},
y grid style={white!69.0196078431373!black},
ylabel={\(\displaystyle E^{\delta x}\)},
ymin=0, ymax=1.5,
ytick style={color=black}
]
\addplot [semithick, color0, mark=asterisk, mark size=3, mark options={solid}]
table {%
0.1 0.633312788726185
0.25 0.578730318785132
0.5 0.40017346683664
0.75 0.133899422521728
1 0.0473244827632513
1.25 0.0668113229411979
};
\end{axis}

\end{tikzpicture}} & \resizebox{0.33\textwidth}{!}{% This file was created by tikzplotlib v0.9.8.
\begin{tikzpicture}

\begin{axis}[
tick align=outside,
tick pos=left,
x grid style={white!69.0196078431373!black},
xlabel={\(\displaystyle P^{\delta x}\)},
xmin=0, xmax=1.55,
xtick style={color=black},
y grid style={white!69.0196078431373!black},
ylabel={\(\displaystyle E^{\delta x}\)},
ymin=0, ymax=1.55,
ytick style={color=black}
]
\addplot [draw=blue, fill=blue, mark=+, only marks, scatter]
table{%
x  y
1.36217262265445 0.633312788726185
1.05492578022971 0.578730318785132
0.583333852007676 0.40017346683664
0.163118389057553 0.133899422521728
0.0431171624952924 0.0473244827632513
0.0528045429487633 0.0668113229411979
};
\addplot [semithick, green!50.1960784313725!black, dotted]
table {%
0 0
1.36217262265445 1.92640299725125
};
\draw (axis cs:1.30017262265445,0.663312788726185) node[
  scale=0.7,
  anchor=base west,
  text=black,
  rotate=0.0
]{$c=0.1$};
\draw (axis cs:1.05492578022971,0.608730318785132) node[
  scale=0.7,
  anchor=base west,
  text=black,
  rotate=0.0
]{$c=0.25$};
\draw (axis cs:0.583333852007676,0.43017346683664) node[
  scale=0.7,
  anchor=base west,
  text=black,
  rotate=0.0
]{$c=0.5$};
\draw (axis cs:0.163118389057553,0.163899422521728) node[
  scale=0.7,
  anchor=base west,
  text=black,
  rotate=0.0
]{$c=0.75$};
\draw (axis cs:0.051171624952924,0.0023244827632513) node[
  scale=0.7,
  anchor=base west,
  text=black,
  rotate=0.0
]{$c=1.0$};
\draw (axis cs:0.0528045429487633,0.0968113229411979) node[
  scale=0.7,
  anchor=base west,
  text=black,
  rotate=0.0
]{$c=1.25$};
\end{axis}

\end{tikzpicture}}
\end{tabular}
\caption{Numerically computed solitons for potential \eqref{eq:potBR}
	with $\lambda=4.0$, showing
$P^{\delta x}$ (left) and $E^{\delta x}$ (center) as a function
	of $c$. On the right, $E^{\delta x}$ as a function of $P^{\delta x}$.
%	Numerical parameters are indicated in the text.
}
\label{fig:PotBR-lambda=4.0-NRJ}
\end{figure}

% \clearpage
\subsection{Solitons with potential \eqref{eq:potPierre}}
We consider the potential $\mathcal W$ defined in \eqref{eq:potPierre}.
The minimizers are computed with $L=800$ and $N=7999$ so that $\delta x = 0.1$.
The results are displayed in Figures \ref{fig:PotPierre-L=800-etatheta}
and \ref{fig:PotPierre-L=800-NRJ}.
As seen in Figure~\ref{dispersion:E6}, there is a roton minimum and $c_L \# 0.596$. This is no 
obstacle for the computation of soliton for every subsonic speed. Since  $P^{\delta x}$ is a 
strictly decreasing function, these solitons should be stable. The highly oscillating behavior 
could be explained again by the gap between the Landau speed and the speed of sound.

%\begin{figure}[!ht]
%  \centering
%  \begin{tabular}{cc}
%    \resizebox{0.5\textwidth}{!}{
%       %\input{Figures/tikz-tex/eta-Ex6.tex}}
%      \includegraphics{Figures/pdf/eta-Ex6}}
%    &
%\resizebox{0.5\textwidth}{!}{\includegraphics{Figures/pdf/absu-Ex6}}%\input{Figures/absu-Ex5-cas1.tex}}
%  \end{tabular}
%  \caption{Numerically computed solitons for potential \eqref{eq:potPierre}.
%    On the left : $\bm\eta_k$ as a function of $x_k$. On the right : $\sqrt{1-\bm\eta_k}$ as
%    a function of $x_k$. Numerical parameters are indicated in the text. {\color{green} Old version with $L=400$}}
%  \label{fig:etadec12}
%\end{figure}

%\begin{figure}[!ht]
%  \centering
%  \begin{tabular}{ccc}
%     \resizebox{0.33\textwidth}{!}{\input{Figures/tikz-tex/Pdec-Ex6.tex}}
%    &
%    \resizebox{0.33\textwidth}{!}{\input{Figures/tikz-tex/Edec-Ex6.tex}} & \resizebox{0.33\textwidth}{!}{\input{Figures/tikz-tex/EdeP-Ex6.tex}}
%  \end{tabular}
%  \caption{Numerically computed solitons for potential \eqref{eq:potPierre}.
%    On the left : $P^{\delta x}$ as a function of $c$. In the middle : $E^{\delta x}$ as a function
%    of $c$. On the right : $E^{\delta x}$ as a function of $P^{\delta x}$.
%    Numerical parameters are indicated in the text.
%  {\color{green} Old version with $L=400$}}
%  \label{fig:Edec13}
%\end{figure}

%{\color{red} 27/04/22 : The graphs above in Figures \ref{fig:etadec12} and \ref{fig:Edec13}
%  with potential \eqref{eq:potPierre} have been obtained
%  from {\it unconverged} numerical simulations.}

%% L= 800.0
\begin{figure}[!ht]
  \centering
  \begin{tabular}{cc}
    \resizebox{0.5\textwidth}{!}{
      \includegraphics{Figures/pdf/PotPierre-L=800-eta-eps-converted-to}}
    &
\resizebox{0.5\textwidth}{!}{\includegraphics{Figures/pdf/PotPierre-L=800-thetadex-eps-converted-to}}%\input{Figures/absu-Ex5-cas1.tex}}
  \end{tabular}
  \caption{Numerically computed solitons for potential \eqref{eq:potPierre},
  	showing $\bm\eta_k$ (left) and ${\bm \theta}_k$ (right) as
    a function of $x_k$.}
  \label{fig:PotPierre-L=800-etatheta}
\end{figure}

\begin{figure}[!ht]
  \centering
  \begin{tabular}{ccc}
     \resizebox{0.33\textwidth}{!}{% This file was created by tikzplotlib v0.9.8.
\begin{tikzpicture}

\definecolor{color0}{rgb}{0.12156862745098,0.466666666666667,0.705882352941177}

\begin{axis}[
tick align=outside,
tick pos=left,
x grid style={white!69.0196078431373!black},
xlabel={\(\displaystyle c\)},
xmin=0, xmax=1.4142135623731,
xtick style={color=black},
y grid style={white!69.0196078431373!black},
ylabel={\(\displaystyle P^{\delta x}\)},
ymin=0, ymax=1.7,
ytick style={color=black}
]
\addplot [semithick, color0, mark=asterisk, mark size=3, mark options={solid}]
table {%
0.1 1.33954103142172
0.25 0.98112192540132
0.5 0.338216411582617
0.75 0.0703879126613871
1 0.0418160310941601
1.25 0.0326218281123937
};
\end{axis}

\end{tikzpicture}}
    &
    \resizebox{0.33\textwidth}{!}{% This file was created by tikzplotlib v0.9.8.
\begin{tikzpicture}

\definecolor{color0}{rgb}{0.12156862745098,0.466666666666667,0.705882352941177}

\begin{axis}[
tick align=outside,
tick pos=left,
x grid style={white!69.0196078431373!black},
xlabel={\(\displaystyle c\)},
xmin=0, xmax=1.4,
xtick style={color=black},
y grid style={white!69.0196078431373!black},
ylabel={\(\displaystyle E^{\delta x}\)},
ymin=0, ymax=1.5,
ytick style={color=black}
]
\addplot [semithick, color0, mark=asterisk, mark size=3, mark options={solid}]
table {%
0.25 0.480390630267514
0.5 0.216944036940047
0.75 0.0556394509809669
1 0.0483560466395352
1.25 0.0401838526000534
};
\end{axis}

\end{tikzpicture}} & \resizebox{0.33\textwidth}{!}{% This file was created by tikzplotlib v0.9.8.
\begin{tikzpicture}

\begin{axis}[
tick align=outside,
tick pos=left,
x grid style={white!69.0196078431373!black},
xlabel={\(\displaystyle P^{\delta x}\)},
xmin=0, xmax=1.55,
xtick style={color=black},
y grid style={white!69.0196078431373!black},
ylabel={\(\displaystyle E^{\delta x}\)},
ymin=0, ymax=1.55,
ytick style={color=black}
]
\addplot [draw=blue, fill=blue, mark=+, only marks, scatter]
table{%
x  y
1.33954103142172 0.541055620584826
0.98112192540132 0.480390630267514
0.338216411582617 0.216944036940047
0.0703879126613871 0.0556394509809669
0.0418160310941601 0.0483560466395352
0.0326218281123937 0.0401838526000534
};
\addplot [semithick, green!50.1960784313725!black, dotted]
table {%
0 0
1.33954103142172 1.89439709399183
};
\draw (axis cs:1.33954103142172,0.571055620584826) node[
  scale=0.7,
  anchor=base west,
  text=black,
  rotate=0.0
]{$c=0.1$};
\draw (axis cs:0.98112192540132,0.510390630267514) node[
  scale=0.7,
  anchor=base west,
  text=black,
  rotate=0.0
]{$c=0.25$};
\draw (axis cs:0.338216411582617,0.246944036940047) node[
  scale=0.7,
  anchor=base west,
  text=black,
  rotate=0.0
]{$c=0.5$};
\draw (axis cs:0.0703879126613871,0.0856394509809669) node[
  scale=0.7,
  anchor=base west,
  text=black,
  rotate=0.0
]{$c=0.75$};
\draw (axis cs:0.0418160310941601,0.0783560466395352) node[
  scale=0.7,
  anchor=base west,
  text=black,
  rotate=0.0
]{$c=1.0$};
\draw (axis cs:0.0326218281123937,0.0701838526000534) node[
  scale=0.7,
  anchor=base west,
  text=black,
  rotate=0.0
]{$c=1.25$};
\end{axis}

\end{tikzpicture}}
  \end{tabular}
  \caption{Numerically computed solitons for potential \eqref{eq:potPierre}, showing 
 $P^{\delta x}$ (left) and $E^{\delta x}$ (center) as a function
    of $c$. On the right, $E^{\delta x}$ as a function of $P^{\delta x}$.
    % Numerical parameters are indicated in the text.
    }
  \label{fig:PotPierre-L=800-NRJ}
\end{figure}

\section{Conclusion}
 This paper introduces a numerical method to compute dark solitons to the nonlocal Gross--Pitaevskii 
 equation with nonlocal realistic potentials, with a prescribed speed.
 Using an {\it ad hoc} formulation of the problem (see \eqref{eq:zeroJ}-\eqref{eq:defJc},
 and the discrete analogue \eqref{eq:defJnum}), one has to solve a nonlinear nonlocal problem
 ($J_c^{\delta x}({\bm \eta},\lambda)=0_{\R^N}$ for a fixed parameter value $\lambda\in\R$)
 in the neighborhood of a guess, which is linked with the known continuous soliton with same speed
 for $\lambda=0$. This problem is solved numerically by a gradient descent method minimizing
 the residue, and allows for numerical simulations for several physically realistic
 nonlocal interaction potentials.
 
 Theses numerical simulations provide insight into the behavior of solitons and allow us to
 discuss the influence of the nonlocal interactions
 on the shape of the dark solitons, as well as on their stability.
 Moreover, they suggest that the speed of sound and the Landau speed, 
 given by the dispersion relation, are important values for  understanding  the properties of these 
 dark 
 solitons. 
 %We also compare the numerical results obtained by this method with those found in similar cases using other methods.

 In future research, we hope to find more precise criteria for determining the behavior of dark solitons.
For instance, we aim to obtain conditions on the interaction potential to ensure that the corresponding dark solitons exhibit either monotonic or oscillating behavior.
 
\begin{merci}
	The authors acknowledge support from the Labex CEMPI (ANR-11-LABX-0007-01).
	A.~de Laire was also supported by the ANR project ODA (ANR-18-CE40-0020-01).
	S.~L\'opez-Mart\'inez was supported by the Madrid Government (Comunidad de Madrid – Spain) 
	under the multiannual Agreement with UAM in the line for the Excellence of the University 
	Research Staff in the context of the V PRICIT (Regional Program of Research and Technological 
	Innovation).
%	 S.~L\'opez-Mart\'inez was also supported by PGC2018-096422-B-I00 (MCIU/AEI/FEDER, UE) and 
%Junta de Andaluc\'ia FQM-116. S.~L\'opez-Mart\'inez would like to thank the members of the 
%Laboratoire Paul Painlev\'e (Universit\'e de Lille) and of the team PARADYSE (Inria Lille - Nord 
%Europe) for their support and hospitality during his postdoc stay, where this work was carried 
%out. 
\end{merci}

 \clearpage

\section*{Appendix}

\begin{proof}[Proof of Theorem~\ref{thm:dLMar}]
	Let $m=\inf_{\xi\in\R}(\widehat\W(\xi)^-)$, where $s^-=\min\{s,0\}$, and let $R=\sqrt{2(\sigma-m)}$. For any $\tilde{\sigma}\in (0,\sigma)$, we choose $k=(\tilde{\sigma}-m)/R^2=(\tilde{\sigma}-m)/(2(\sigma-m))$. Clearly, $k\in (0,1/2)$. 
	
	On the one hand, for a.e.\ $|\xi|\geq R$, one trivially has
	\[\widehat\W(\xi)\geq m=\tilde\sigma - kR^2\geq\tilde\sigma-k\xi^2.\]
	On the other hand, observe that $\sigma-\xi^2/2\geq\tilde\sigma-k\xi^2$ if, and only if, $|\xi|\leq R$. Therefore, for a.e.\ $|\xi|\leq R$, we get 
	\[\widehat\W(\xi)\geq \sigma-\frac{\xi^2}{2}\geq\tilde\sigma-k\xi^2.\]
	We may now apply Theorem~{1.1} in \cite{dLMar2022} and get a solution to \eqref{eq:TWc} for a.e.\ $c\in (0,\sqrt{2\tilde\sigma})$. Taking $\tilde\sigma$ arbitrarily close to $\sigma$ we obtain a solution for a.e.\ $c\in (0,\sqrt{2\sigma})$, which completes the proof.
\end{proof}

\bibliographystyle{abbrv}   
%\bibliography{labib}

\begin{thebibliography}{10}
	
	\bibitem{aftalion-blanc}
	A.~Aftalion, X.~Blanc, and R.~L. Jerrard.
	\newblock Mathematical issues in the modelling of supersolids.
	\newblock {\em Nonlinearity}, 22(7):1589--1614, 2009.
	
	\bibitem{barenghi2001quantized}
	C.~F. Barenghi, R.~J. Donnelly, and W.~Vinen.
	\newblock {\em Quantized vortex dynamics and superfluid turbulence}, volume
	571.
	\newblock Springer Science \& Business Media, 2001.
	
	\bibitem{becker2008}
	C.~Becker, S.~Stellmer, P.~Soltan-Panahi, S.~D{\"o}rscher, M.~Baumert, E.-M.
	Richter, J.~Kronj{\"a}ger, K.~Bongs, and K.~Sengstock.
	\newblock {Oscillations and interactions of dark and dark--bright solitons in
		Bose--Einstein condensates}.
	\newblock {\em Nature Physics}, 4(6):496--501, 2008.
	
	\bibitem{berloff0}
	N.~G. Berloff and P.~H. Roberts.
	\newblock Motions in a {B}ose condensate {VI}. {V}ortices in a nonlocal model.
	\newblock {\em J. Phys. A}, 32(30):5611--5625, 1999.
	
	\bibitem{bethuel2008existence}
	F.~B\'ethuel, P.~Gravejat, and J.-C. Saut.
	\newblock Existence and properties of travelling waves for the
	{G}ross-{P}itaevskii equation.
	\newblock In {\em Stationary and time dependent {G}ross-{P}itaevskii
		equations}, volume 473 of {\em Contemp. Math.}, pages 55--103. Amer. Math.
	Soc., Providence, RI, 2008.
	
	\bibitem{BeGraSaut-stability}
	F.~B\'{e}thuel, P.~Gravejat, J.-C. Saut, and D.~Smets.
	\newblock Orbital stability of the black soliton for the {G}ross-{P}itaevskii
	equation.
	\newblock {\em Indiana Univ. Math. J.}, 57(6):2611--2642, 2008.
	
	\bibitem{BethGrav2015}
	F.~Bethuel, P.~Gravejat, and D.~Smets.
	\newblock Asymptotic stability in the energy space for dark solitons of the
	{G}ross-{P}itaevskii equation.
	\newblock {\em Ann. Sci. \'{E}c. Norm. Sup\'{e}r. (4)}, 48(6):1327--1381, 2015.
	
	\bibitem{chiron-stability}
	D.~Chiron.
	\newblock Stability and instability for subsonic traveling waves of the
	nonlinear {S}chr\"{o}dinger equation in dimension one.
	\newblock {\em Anal. PDE}, 6(6):1327--1420, 2013.
	
	\bibitem{de2010global}
	A.~de~Laire.
	\newblock Global well-posedness for a nonlocal {G}ross-{P}itaevskii equation
	with non-zero condition at infinity.
	\newblock {\em Comm. Partial Differential Equations}, 35(11):2021--2058, 2010.
	
	\bibitem{dLMar2022}
	A.~de~Laire and S.~L\'{o}pez-Mart\'{\i}nez.
	\newblock Existence and decay of traveling waves for the nonlocal
	{G}ross-{P}itaevskii equation.
	\newblock {\em Comm. Partial Differential Equations}, 47(9):1732--1794, 2022.
	
	\bibitem{delaire-mennuni}
	A.~de~Laire and P.~Mennuni.
	\newblock Traveling waves for some nonlocal 1{D} {G}ross-{P}itaevskii equations
	with nonzero conditions at infinity.
	\newblock {\em Discrete Contin. Dyn. Syst.}, 40(1):635--682, 2020.
	
	\bibitem{frantzeskakis2010}
	D.~J. Frantzeskakis.
	\newblock {Dark solitons in atomic Bose--Einstein condensates: from theory to
		experiments}.
	\newblock {\em Journal of Physics A: Mathematical and Theoretical},
	43(21):213001, 2010.
	
	\bibitem{GriShaStra}
	M.~Grillakis, J.~Shatah, and W.~Strauss.
	\newblock Stability theory of solitary waves in the presence of symmetry. {I}.
	\newblock {\em J. Funct. Anal.}, 74(1):160--197, 1987.
	
	\bibitem{Kong-Wang-10}
	Q.~Kong, Q.~Wang, O.~Bang, and W.~Krolikowski.
	\newblock Analytical theory of dark nonlocal solitons.
	\newblock {\em Opt. Lett.}, 35(13):2152--2154, 2010.
	
	\bibitem{krolikowski2000}
	W.~Kr{\'o}likowski and O.~Bang.
	\newblock Solitons in nonlocal nonlinear media: Exact solutions.
	\newblock {\em Physical Review E}, 63(1):016610, 2000.
	
	\bibitem{linbubbles}
	Z.~Lin.
	\newblock Stability and instability of traveling solitonic bubbles.
	\newblock {\em Adv. Differential Equations}, 7(8):897--918, 2002.
	
	\bibitem{Lopez-Aguayo}
	S.~Lopez-Aguayo, A.~S. Desyatnikov, Y.~S. Kivshar, S.~Skupin, W.~Krolikowski,
	and O.~Bang.
	\newblock Stable rotating dipole solitons in nonlocal optical media.
	\newblock {\em Opt. Lett.}, 31(8):1100--1102, 2006.
	
	\bibitem{nikolov2004}
	N.~I. Nikolov, D.~Neshev, W.~Kr{\'o}likowski, O.~Bang, J.~J. Rasmussen, and
	P.~L. Christiansen.
	\newblock Attraction of nonlocal dark optical solitons.
	\newblock {\em Optics letters}, 29(3):286--288, 2004.
	
	\bibitem{reneuve2018}
	J.~Reneuve, J.~Salort, and L.~Chevillard.
	\newblock Structure, dynamics, and reconnection of vortices in a nonlocal model
	of superfluids.
	\newblock {\em Physical Review Fluids}, 3(11):114602, 2018.
	
	\bibitem{veskler2014}
	H.~Veksler, S.~Fishman, and W.~Ketterle.
	\newblock Simple model for interactions and corrections to the
	{G}ross-{P}itaevskii equation.
	\newblock {\em Phys. Rev. A}, 90(2):023620, 2014.
	
\end{thebibliography}

\end{document}